\newcommand{\burl}[1]{\textcolor{blue}{\url{#1}}}
\newcommand{\lr}[1]{\left\lfloor#1\right\rfloor}
\numberwithin{equation}{section}
\newtheorem{thm}{Theorem}[section]
\newtheorem{cor}[thm]{Corollary}
\newtheorem{lem}[thm]{Lemma}
\newtheorem{prop}[thm]{Proposition}
\theoremstyle{plain}
\newtheorem{proposition}[thm]{Proposition}
\newtheorem{theorem}[thm]{Theorem}
\newcommand\be{\begin{equation}}
\newcommand\ee{\end{equation}}
\newcommand\bea{\begin{eqnarray}}
\newcommand\eea{\end{eqnarray}}
\newcommand\bi{\begin{itemize}}
\newcommand\ei{\end{itemize}}
\newcommand\ben{\begin{enumerate}}
\newcommand\een{\end{enumerate}}
\newcommand\bc{\begin{center}}
\newcommand\ec{\end{center}}
\newcommand\ba{\begin{array}}
\newcommand\ea{\end{array}}
\def\cn{\frac{1}{(\mu_n-1)a_{2n+1}}}
\def\cm{c_n}
\newcommand\frakfamily{\usefont{U}{yfrak}{m}{n}}
\DeclareTextFontCommand{\textfrak}{\frakfamily}
\newcommand{\ncr}[2]{{#1 \choose #2}}
\newcommand{\twocase}[5]{#1 \begin{cases} #2 & \text{{\rm #3}}\\ #4
&\text{{\rm #5}} \end{cases}   }
\newcommand{\hr}[1]{\href{#1}{\url{#1}}}
\newcommand{\sqe}{{\sqrt{1+8e^{r_n}}}}
\title{Generalizing Zeckendorf's Theorem: The Kentucky Sequence}
\author{Minerva Catral}
\email{\textcolor{blue}{\href{mailto:catralm@xavier.edu}{catralm@xavier.edu}}}
\address{Department of Mathematics and Computer Science, Xavier University, Cincinnati, OH 45207}
\author{Pari Ford}
\email{\textcolor{blue}{\href{mailto:fordpl@unk.edu}{fordpl@unk.edu}}}
\address{Department of Mathematics and Statistics, University of Nebraska at Kearney, Kearney, NE 68849}
\author{Pamela Harris}
\email{\textcolor{blue}{\href{mailto:Pamela.Harris@usma.edu}{Pamela.Harris@usma.edu}}}
\address{Department of Mathematical Sciences, United States Military Academy, West Point, NY 10996}
\author{Steven J. Miller}
\email{\textcolor{blue}{\href{mailto:sjm1@williams.edu}{sjm1@williams.edu}},  \textcolor{blue}{\href{Steven.Miller.MC.96@aya.yale.edu}{Steven.Miller.MC.96@aya.yale.edu}}}
\address{Department of Mathematics and Statistics, Williams College, Williamstown, MA 01267}
\author{Dawn Nelson}
\email{\textcolor{blue}{\href{mailto:dnelson1@saintpeters.edu}{dnelson1@saintpeters.edu}}}
\address{Department of Mathematics, Saint Peter's University, Jersey City, NJ 07306}
\thanks{The fourth named author was partially supported by NSF grant DMS1265673. This research was performed while the third named author held a National Research Council Research Associateship Award at USMA/ARL. This work was begun at the 2014 REUF Meeting at AIM; it is a pleasure to thank them for their support, and the participants there and at the 16\textsuperscript{th} International Conference on Fibonacci Numbers and their Applications for helpful discussions.}
\subjclass[2010]{60B10, 11B39, 11B05  (primary) 65Q30 (secondary)}
\keywords{Zeckendorf decompositions, Fibonacci numbers, Generacci numbers, positive linear recurrence relations, Gaussian behavior, distribution of gaps}
\date{\today}
\begin{document}

\maketitle

\begin{abstract} By Zeckendorf's theorem, an equivalent definition of the Fibonacci sequence (appropriately normalized) is that it is the unique sequence of increasing integers such that every positive number can be written uniquely as a sum of non-adjacent elements; this is called a legal decomposition. Previous work examined the distribution of the number of summands and the spacings between them, in legal decompositions arising from the Fibonacci numbers and other linear recurrence relations with non-negative integral coefficients. Many of these results were restricted to the case where the first term in the defining recurrence was positive. We study a generalization of the Fibonacci numbers with a simple notion of legality which leads to a recurrence where the first term vanishes. We again have unique legal decompositions, Gaussian behavior in the number of summands, and geometric decay in the distribution of gaps.
\end{abstract}



\section{Introduction}


One of the standard definitions of the Fibonacci numbers $\{F_n\}$ is that it is     the unique sequence satisfying the recurrence $F_{n+1} = F_n + F_{n-1}$ with initial conditions $F_1 = 1$, $F_2 = 2$. An interesting and equivalent definition is that it is the unique increasing sequence of positive integers such that every positive number can be written uniquely as a sum of non-adjacent elements of the sequence.\footnote{If we started the Fibonacci numbers with a zero, or with two ones, we would lose uniqueness.} This equivalence is known as Zeckendorf's theorem \cite{Ze}, and frequently one says every number has a unique legal decomposition as a sum of non-adjacent Fibonacci numbers.

In recent years there has been a lot of research exploring other notions of a legal decomposition,  seeing which sequences result, and studying  the properties of the resulting sequences and decompositions (see for example \cite{Al,Day,DDKMMV,DDKMV,DG,FGNPT,GT,GTNP,Ke,Len,MW2,Ste1,Ste2} among others).
Most of the previous work has been on sequences $\{G_n\}$ where the recurrence relation coefficients are non-negative integers, with the additional restriction being that the first and last terms are positive\footnote{Thus $G_{n+1} = c_1 G_n + \cdots + c_L G_{n-(L-1)}$ with $c_1 c_L > 0$ and $c_i \ge 0$.} (see for instance \cite{MW1}, who call these \textbf{Positive Linear Recurrence Sequences}).

Much is known about the properties of the summands in decompositions. The first result is Lekkerkerker's theorem \cite{Lek}, which says the number of summands needed in the decomposition of $m \in [F_n, F_{n+1})$ grows linearly with $n$. Later authors extended this to other recurrences and found that the distribution of the number of summands converges to a Gaussian. Recently the distribution of gaps between summands in decompositions was studied; the distribution of the longest gap between summands converges to the same distribution one sees when looking at the longest run of heads in tosses of a biased coin, while the probability of observing a gap of length $g$ converges to a geometric random variable for $g > L$ (and is computable for smaller $g$, with the result depending on the recurrence); good sources on these recent gap results are \cite{BBGILMT,B-AM,BILMT}.

Our goal is to extend these results to recurrences that could not be handled by existing techniques. To that end, we study a sequence arising from a notion of a legal decomposition whose recurrence has first term equal to zero.\footnote{Thus in $G_{n+1} = c_1 G_n + \cdots + C_L G_{n-(L-1)}$ we have $c_1 = 0$.} While this sequence does fit into the new framework of an $f$-decomposition introduced in \cite{DDKMMV}, their arguments only suffice to show that our decomposition rule leads to unique decompositions, and is sadly silent on the distribution of the number of summands and the gaps between them; we remedy this below by completely resolving these issues in Theorems \ref{thm:gaussian} and \ref{thm:gapstheorem}.

We now describe our object of study. We can view the decomposition rule of the Fibonacci numbers as saying our sequence is divided into bins of length 1, and (i) we can use at most one element from a bin at most one time, and (ii) we cannot choose elements from two adjacent bins. This suggests a natural extension where the bins now contain $b$ elements and any two summands of a decomposition cannot be members of the same bin or any of the $s$ bins immediately before or any of the $s$ bins immediately after. We call this the \textbf{$(s,b)$-Generacci sequence}, and thus the Fibonacci numbers are the $(1,1)$-Generacci sequence. In this paper we consider the next simplest case: $s=1, b=2$. While the ideas needed to analyze this case carry over to the more general case, it is useful to specialize so that the technical details do not needlessly clutter arguments. For ease of exposition, we decided to give this special sequence a name, and are calling it the \textbf{Kentucky-2} sequence after the homestate of one of our authors.\footnote{The Kentucky-1 sequence is equivalent to the Fibonacci sequence.}

The elements of the Kentucky-2 sequence are partitioned into bins of size 2, and thus the $k$\textsuperscript{th} bin is \be b_k \ := \ \{a_{2k-1}, a_{2k}\}.\ee For a positive integer $m$, a Kentucky-2 legal decomposition is \be m \ = \ a_{\ell_1} + a_{\ell_2}  + \cdots + a_{\ell_k}, \ \ \ \ell_1\ <\ \ell_2 \ < \ \cdots\ <\ \ell_k\ee and $\{a_{\ell_j}, a_{\ell_{j+1}}\} \not\subset b_i \cup b_{i-1}$ for any $i,j$ (i.e., we cannot decompose a number using more than one summand from the same bin or two summands from adjacent bins). The first few terms of the Kentucky-2 sequence are
\begin{equation} \underbracket{\ 1,\ 2\ }_{b_1}\ ,\ \underbracket{\    3,\  4 \ }_{b_2}\ ,\ \underbracket{\      5,\ 8 \ }_{b_3}\ ,\ \underbracket{\     11,\  16 \ }_{b_4}\ ,\ \underbracket{\     21,\  32 \ }_{b_5}\ ,\ \underbracket{\     43,\  64 \ }_{b_6}\ , \ \underbracket{\      85,\ 128 \ }_{b_7}\ ,\ \underbracket{\     171,\  256 \ }_{b_8}\ ,\ \ldots\end{equation}

We have a nice closed form expression for the elements of this sequence.

\begin{theorem}\label{thm:recurrencevalues} If $\{a_n\}$ is the Kentucky-2 sequence, then \begin{equation}
a_{n+1} \ = \  a_{n-1} + 2a_{n-3}, \ a_1 \ = \  1, \ a_2 \ = \  2, \ a_3 \ = \  3, \ a_4 \ = \ 4,
\end{equation} which implies
\begin{equation} a_{2n}\ = \ 2^n\ \ \ \ {\rm and} \ \ \ \ a_{2n-1}\ =\ \frac{1}{3}\left(2^{n+1}+(-1)^n\right).\end{equation}
\end{theorem}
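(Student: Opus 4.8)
The plan is to derive the recurrence directly from the legality rule, using the standard device of tracking the largest integer representable by an initial block of bins, and then to solve the resulting recurrence. Throughout, the defining property I rely on is that $\{a_n\}$ is the unique increasing sequence of positive integers for which every positive integer has a unique Kentucky-2 legal decomposition.

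First I would introduce $S_k$, the largest integer admitting a legal decomposition that uses only elements of the bins $b_1,\dots,b_k$, together with the convention $S_k=0$ for $k\le 0$. The key structural claim, which I would prove by induction on $k$, is a completeness statement: every integer in $[1,S_k]$ has a legal decomposition into elements of $b_1,\dots,b_k$. Granting this, the values of the sequence are forced. Since no legal decomposition may use two adjacent bins, the maximum $S_k$ is attained by taking the larger element $a_{2k}$ of $b_k$ together with an optimal legal choice from $b_1,\dots,b_{k-2}$, giving
\[ S_k \ = \ a_{2k}+S_{k-2}. \]
Moreover $S_{k-1}+1$ is the smallest integer with no legal decomposition using $b_1,\dots,b_{k-1}$, so it must be the first element of the next bin, $a_{2k-1}=S_{k-1}+1$; and once $a_{2k-1}$ is placed, the integers it can reach as the top summand are exactly $a_{2k-1}+[0,S_{k-2}]$ (we may adjoin only $b_1,\dots,b_{k-2}$, as $b_{k-1}$ is adjacent to $b_k$), so the next value needed is $a_{2k}=a_{2k-1}+S_{k-2}+1$.

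Using $a_{2k-1}=S_{k-1}+1$, equivalently $S_{k-2}=a_{2k-3}-1$, to eliminate $S$ from the last two displays, I obtain the two interleaved relations
\[ a_{2k} \ = \ a_{2k-1}+a_{2k-3}, \qquad a_{2k+1} \ = \ a_{2k}+a_{2k-3}. \]
Combining these proves $a_{n+1}=a_{n-1}+2a_{n-3}$ by checking the two parities: for odd $n+1=2k+1$ I substitute the first relation into the second, and for even $n+1=2k$ I substitute both relations into $a_{2k}=a_{2k-1}+a_{2k-3}$ and reduce using the first relation reindexed (which gives $a_{2k-5}+a_{2k-7}=a_{2k-4}$). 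The stated initial values $a_1,\dots,a_4$ are read off directly. Finally I would solve the recurrence. Its characteristic polynomial is $x^4-x^2-2=(x^2-2)(x^2+1)$, with roots $\pm\sqrt2$ and $\pm i$; the $\pm\sqrt2$ pair produces the $2^n$ growth and the $\pm i$ pair the $(-1)^n$ oscillation, which already predicts the shape of the closed forms. Rather than resolve four constants, it is cleaner to verify by induction that $a_{2n}=2^n$ and $a_{2n-1}=\tfrac13\!\left(2^{n+1}+(-1)^n\right)$ satisfy both interleaved relations and the initial data, a short computation using $(-1)^n+(-1)^{n-1}=0$.

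The main obstacle is the completeness step of the second paragraph: proving that the greedy values really do make every integer in $[1,S_k]$ uniquely representable under the combined bin and no-adjacent-bin constraints. This is where the interaction between the two-element bins and the adjacency rule must be handled carefully, and it is the only part that genuinely uses the legality definition; everything afterward is forced algebra. I expect this completeness argument to itself be an induction, strengthened so that the inductive hypothesis simultaneously controls representability on $[1,S_k]$ and the exact location of the next two sequence elements.
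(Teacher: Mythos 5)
Your proposal is correct and follows essentially the same route as the paper: the paper also identifies each new element as one more than the largest legally decomposable integer using the preceding bins, derives the same two interleaved relations $a_{2n}=a_{2n-1}+a_{2n-3}$ and $a_{2n+1}=a_{2n}+a_{2n-3}$, combines them into $a_{n+1}=a_{n-1}+2a_{n-3}$, and verifies the closed forms by induction (the even case via the doubling relation $a_{2n}=2a_{2n-2}$). The completeness step you flag as the main obstacle is exactly what the paper outsources to its Appendix A proof of unique decomposition, so your structure matches the paper's division of labor.
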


This is not a Positive Linear Recurrence Sequence as the leading coefficient (that of $a_n$) is zero, and this sequence falls outside the scope of many of the previous techniques. We prove the following theorems concerning the Kentucky-2 Sequence.

\begin{theorem}[Uniqueness of Decompositions]\label{K2unique} Every positive integer can be written uniquely as a sum of distinct terms from the Kentucky-2 sequence where no two summands are in the same bin and no two summands belong to consecutive bins in the sequence. \end{theorem}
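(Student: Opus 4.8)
The plan is to prove existence and uniqueness separately, both by induction, using the explicit values from Theorem \ref{thm:recurrencevalues}. The key structural fact I would exploit is that the bins partition the sequence and the legality rule ``no two summands in the same or consecutive bins'' means that any legal decomposition picks at most one element, from a set of non-consecutive bins. So I would first reformulate the problem in terms of bins: a legal decomposition corresponds to a choice of bins $b_{i_1}, b_{i_2}, \ldots$ with $i_{j+1} \ge i_j + 2$, together with a choice of one element ($a_{2i_j-1}$ or $a_{2i_j}$) from each chosen bin.

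\textbf{Existence.} I would prove by strong induction that every positive integer $m$ has at least one legal decomposition, using a greedy algorithm. The natural claim to set up is that the legal decompositions using only elements from bins $b_1, \ldots, b_k$ represent exactly the integers in $[1, a_{2k}] = [1, 2^k]$, or something close to it. The inductive step takes $m$, locates the largest sequence element $a_\ell \le m$, subtracts it, and argues that $m - a_\ell$ is small enough that its greedy decomposition avoids $a_\ell$'s bin and the adjacent bin. To make this work I would need the gap estimate that $a_{2k} - a_{2k-3}$ (the largest value representable using bins up to $b_{k-2}$) is still at least the remainder, i.e., I would verify using the closed forms $a_{2n} = 2^n$ and $a_{2n-1} = \frac13(2^{n+1} + (-1)^n)$ that subtracting the greedy summand always leaves a remainder representable within the legally available bins.

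\textbf{Uniqueness.} For uniqueness I would again induct, and the cleanest formulation is to count: show that the number of legal decompositions using elements from the first $k$ bins equals the number of integers in the range those decompositions can represent, so that existence forces uniqueness by a pigeonhole/counting argument. Concretely, let $N_k$ be the number of legal decompositions drawing only on bins $b_1, \ldots, b_k$ (including the empty one representing $0$); a one-step recursion conditioning on whether $b_k$ is used (and if so, which of its two elements) gives $N_k = N_{k-1} + 2N_{k-2}$, matching the recurrence for the sequence values themselves. Combined with the existence claim that these decompositions represent a full interval of integers with no gaps, the matching counts force the representation to be a bijection, hence unique.

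The main obstacle I anticipate is the bookkeeping in the existence step: verifying that the greedy choice is always legal, specifically that after subtracting the largest $a_\ell \le m$ the remainder $m - a_\ell$ is strictly smaller than the smallest element of the bin two steps below $a_\ell$'s bin, so the next greedy summand automatically respects the two-bin separation rule. This requires a careful case split depending on whether $\ell$ is even or odd (i.e., whether the greedy summand is the larger or smaller element of its bin), and the inequalities must be checked using the two different closed-form expressions. The Gaussian and gap theorems later in the paper will lean on exactly this bin-interval structure, so I would state the interval-covering fact as a standalone lemma rather than burying it inside the uniqueness proof.
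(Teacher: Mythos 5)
Your overall strategy is sound and would yield a complete proof, but it diverges from the paper's in the uniqueness half. The paper's proof (Appendix \ref{sec:proofofK2unique}) is constructive: it takes each new term to be the smallest integer not legally representable by the terms already present, and shows by induction that the decompositions using terms up to $a_{2n+2}$ tile an initial segment of the integers exactly once; uniqueness there comes from the observation that any decomposition omitting the top term is too small to reach the interval in question. Your existence argument (greedy: peel off the largest $a_\ell \le m$ and bound the remainder using $a_{2k+1}=a_{2k}+a_{2k-3}$ and $a_{2k}=a_{2k-1}+a_{2k-3}$) is essentially the paper's existence step in different clothing. Your uniqueness argument is genuinely different: counting legal decompositions drawn from the first $k$ bins via $N_k = N_{k-1}+2N_{k-2}$, matching this against $a_{2k+1}=a_{2k-1}+2a_{2k-3}$ to conclude $N_k=a_{2k+1}$, and then deducing injectivity from surjectivity onto a finite set of the same cardinality. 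That is a clean and correct alternative, and it dovetails with the identity $\sum_k p_{n,k}=a_{2n+1}$ that the paper exploits later; the paper's route has the advantage of not presupposing the closed forms, since it constructs the sequence and derives the recurrence simultaneously, whereas your counting step leans on Theorem \ref{thm:recurrencevalues} as an input.

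Two details in your sketch need repair before it closes up. First, the set of values represented by decompositions drawing on bins $b_1,\dots,b_k$ is $[0, a_{2k+1})$, not $[1,a_{2k}]$: the maximal legal sum $a_{2k}+a_{2k-4}+a_{2k-8}+\cdots$ equals $a_{2k+1}-1$, which is roughly $\tfrac{4}{3}\,2^k$, so your ``standalone lemma'' must carry the endpoint $a_{2k+1}-1$ for the cardinalities to match $N_k$. Second, your legality condition for the greedy step is off by one bin: with $a_\ell\in b_k$, what you need is that the remainder $m-a_\ell$ be strictly less than $a_{2k-3}$, the smallest element of $b_{k-1}$ (one bin below), so that the next greedy summand falls in $b_{k-2}$ or lower; requiring it to be smaller than the smallest element of the bin two steps below is both stronger than necessary and false (take $m=a_{2k}+a_{2k-4}$, whose greedy remainder lies in $b_{k-2}$ itself). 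The correct inequality does hold in both parity cases, since $m<a_{2k+1}$ gives $m-a_{2k}\le a_{2k-3}-1$ and $m<a_{2k}$ gives $m-a_{2k-1}\le a_{2k-3}-1$, so with these corrections the argument goes through.
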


While the above follows immediately from the work of Demontigny, Do, Kulkarni, Miller, Moon and Varma \cite{DDKMMV} on $f$-decompositions (take $f(n) = 3$ if $n$ is even and $f(n) = 2$ otherwise), for completeness we give an elementary proof in Appendix \ref{sec:proofofK2unique}. We next generalize the results on Gaussian behavior for the summands to this case.

\begin{theorem}[Gaussian Behavior of Summands]\label{thm:gaussian}
Let the random variable $Y_n$ denote the number of summands in the (unique) Kentucky-2 decomposition of an integer picked at random from $[0, a_{2n+1})$ with uniform probability.\footnote{Using the methods of \cite{BDEMMTTW}, these results can be extended to hold almost surely for sufficiently large sub-interval of $[0, a_{2n+1})$.} Normalize $Y_n$ to $Y_n' = (Y_n - \mu_n)/\sigma_n$, where $\mu_n$ and $\sigma_n$ are the mean and variance of $Y_n$ respectively, which satisfy \bea \mu_n & \ = \ &  \frac{n}{3}+\frac{2}{9} + O\left(\frac{n}{2^n}\right) \nonumber\\
\sigma_n^2 & \ = \ &  \frac{2n}{27}+\frac{8}{81}+O\left(\frac{n^2}{2^n}\right). \eea Then $Y_n'$ converges in distribution to the standard normal distribution as $n \rightarrow \infty$.
\end{theorem}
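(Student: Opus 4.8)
The plan is to reduce the theorem to a generating-function computation. First I would count decompositions by their number of summands: let $p_{n,k}$ denote the number of legal Kentucky-2 decompositions that use exactly $k$ summands and whose summands all lie in the bins $b_1,\dots,b_n$. By Theorem \ref{K2unique} every integer in $[0,a_{2n+1})$ has a unique legal decomposition, and since $a_{2n+1}$ is the first element of bin $b_{n+1}$, these are precisely the decompositions drawn from $b_1,\dots,b_n$; hence $\sum_k p_{n,k}=a_{2n+1}$ and $\Pr(Y_n=k)=p_{n,k}/a_{2n+1}$. Conditioning on whether bin $b_n$ is used yields the recurrence
\[
p_{n,k} \ = \ p_{n-1,k} + 2\,p_{n-2,k-1},
\]
where the factor $2$ records the two choices of element within $b_n$ and the index drop to $n-2$ records that the adjacent bin $b_{n-1}$ is then forbidden. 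Encoding this in the generating polynomial $F_n(x)=\sum_k p_{n,k}x^k$ gives the linear recurrence $F_n(x)=F_{n-1}(x)+2x\,F_{n-2}(x)$ with $F_0(x)=1$ and $F_1(x)=1+2x$.

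Next I would solve this recurrence in closed form. Its characteristic equation $\lambda^2-\lambda-2x=0$ has roots $\lambda_\pm(x)=\tfrac12\bigl(1\pm\sqrt{1+8x}\bigr)$, so $F_n(x)=c_+(x)\lambda_+(x)^n+c_-(x)\lambda_-(x)^n$ for coefficient functions $c_\pm$ fixed by the initial data; at $x=1$ one finds $\lambda_+(1)=2$, $\lambda_-(1)=-1$, and $c_+(1)=\tfrac43$, recovering $F_n(1)=a_{2n+1}$. Because $|\lambda_-(x)/\lambda_+(x)|$ stays bounded below $1$ for real $x$ near $1$, the subdominant term is exponentially small and will account for the stated $O(n/2^n)$ and $O(n^2/2^n)$ errors. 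Writing the probability generating function as $g_n(x)=F_n(x)/F_n(1)$ and the cumulant generating function as $K_n(s)=\log g_n(e^s)$, I would read off $\mu_n=K_n'(0)$ and $\sigma_n^2=K_n''(0)$. Since $K_n(s)=n\bigl(\psi(s)-\psi(0)\bigr)+\log\frac{c_+(e^s)}{c_+(1)}+O(2^{-n})$ with $\psi(s):=\log\lambda_+(e^s)$ analytic near $0$, differentiating gives $\mu_n=n\psi'(0)+O(1)$ and $\sigma_n^2=n\psi''(0)+O(1)$; a direct computation yields $\psi'(0)=\tfrac13$ and $\psi''(0)=\tfrac{2}{27}$, and carrying the $c_+$ correction to second order produces the constants $\tfrac29$ and $\tfrac{8}{81}$.

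Finally I would establish the Gaussian limit by the moment generating function method. With $Y_n'=(Y_n-\mu_n)/\sigma_n$,
\[
\mathbb{E}\bigl[e^{tY_n'}\bigr] \ = \ \exp\!\Bigl(K_n\!\bigl(\tfrac{t}{\sigma_n}\bigr)-\tfrac{t\mu_n}{\sigma_n}\Bigr).
\]
Taylor expanding $K_n$ about $0$ and using $K_n'(0)=\mu_n$, $K_n''(0)=\sigma_n^2$, together with the uniform bound $K_n'''=O(n)$ coming from the analyticity of $\psi$, the linear terms cancel and the quadratic term becomes $t^2/2$, while the cubic remainder is $O(n/\sigma_n^3)=O(n^{-1/2})\to 0$ because $\sigma_n^2\sim 2n/27\to\infty$. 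Hence $\mathbb{E}[e^{tY_n'}]\to e^{t^2/2}$ for $t$ in a neighborhood of $0$, and the continuity theorem for moment generating functions gives convergence in distribution to the standard normal.

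The main obstacle is the bookkeeping that makes the heuristic eigenvalue picture rigorous: one must verify that $c_+(x)$ is analytic and nonvanishing on a real neighborhood of $x=1$ (so that $\log c_+$ and the cumulant expansion are legitimate), that $(\lambda_-/\lambda_+)^n$ together with its first two $x$-derivatives is uniformly $O(n^2/2^n)$ there (to pin down the error terms and the exact constants $\tfrac29$ and $\tfrac{8}{81}$), and that $K_n'''$ is controlled uniformly as $n\to\infty$. These estimates are the technical heart of the argument; once they are in place, the central limit theorem follows from the Taylor expansion above.
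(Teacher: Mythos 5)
Your proposal is correct and follows essentially the same route as the paper: the recurrence $p_{n,k}=p_{n-1,k}+2p_{n-2,k-1}$, the resulting generating function with characteristic roots $\tfrac12\bigl(1\pm\sqrt{1+8x}\bigr)$, and the moment-generating-function expansion are exactly the argument given in the paper (most closely its Appendix~\ref{app:gfproof} proof; the main text's Fibonacci-polynomial proof is the same computation in different clothing). Your cumulant-generating-function packaging of the final Taylor expansion is a slightly tidier way to organize the same estimates.
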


Our final results concern the behavior of gaps between summands. For the legal decomposition \be m\ = \ a_{\ell_1} + a_{\ell_2}  + \cdots + a_{\ell_k} \ \ \ {\rm with} \ \ \ \ell_1\ <\ \ell_2 \ < \  \cdots \ < \  \ell_k \ee and $m \in [0,a_{2n+1})$, we define the  set of gaps as follows:
\begin{equation}\text{Gaps}_n(m) \ := \  \{\ell_2-\ell_1, \ell_3 - \ell_2, \dots, \ell_k - \ell_{k-1}\};\end{equation} notice we do not include the wait to the first summand (we could if we wish; one additional gap will not affect the limiting behavior). We can do the analysis two different ways, either averaging over all $m \in [0, a_{2n+1})$ or for each $m$. It is easier to average over all such $m$, and in fact this analysis is the first step towards understanding the behavior of the individual gap measure. In this paper we concentrate on just the average gap measure, though with additional work the techniques from \cite{BILMT} should be applicable and should yield similar results for the individual gaps and the distribution of the longest gap measure. We plan to return to these questions in a later paper where we consider the general $(s,b)$-Generacci sequence.

Thus in the theorem below we consider all the gaps between summands in Kentucky-2  legal decompositions of all $m \in [0, a_{2n+1})$. We let $P_n(g)$ be the fraction of all these gaps that are of length $g$; thus $m = a_1 + a_{11} + a_{15} + a_{22} + a_{26}$ contributes two gaps of length 4, one gap of length 7 and one gap of length 10.

\begin{theorem}[Average Gap Measure]\label{thm:gapstheorem}
For $P_n(g)$ as defined above (the probability of a gap of length $g$ among Kentucky-2 legal decompositions of $m\in[0,a_{2n+1})$), the limit $P(g) := \lim_{n\to\infty}P_n(g)$ exists, and
\be P(0)  \ = \ P(1) \ = \ P(2) \ = \ 0, \ \ \ P(3) \ = \ 1/8,\ee and for $g\geq 4$ we have
\begin{equation} \twocase{P(g)\ = \ }{2^{-j}}{if $g=2j$}{\frac{3}{4} \ 2^{-j}}{if $g=2j+1$.}\end{equation}
\end{theorem}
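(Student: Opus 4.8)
The plan is to set up an exact count of the gaps of each length across all Kentucky-2 decompositions of $m \in [0, a_{2n+1})$, express that count through convolution sums of the sequence values, and then read off the limiting proportions from the closed forms in Theorem \ref{thm:recurrencevalues}. The normalizing factor will be the total number of gaps, which by Theorem \ref{thm:gaussian} behaves like $a_{2n+1}(\mu_n-1)$ and serves as a useful consistency check.

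First I would record the basic counting fact that the legal decompositions using only summands from bins $b_1,\dots,b_t$ are in bijection with the integers in $[0,a_{2t+1})$, hence number exactly $a_{2t+1}$. This follows from the structure underlying Theorem \ref{K2unique}: any decomposition touching a bin $b_{t'}$ with $t'>t$ has value at least $a_{2t+1}$ (the smallest element of $b_{t+1}$), while the largest legal decomposition confined to $b_1,\dots,b_t$ is still smaller than $a_{2t+1}$. Since legality depends only on the relative positions of bins, the same count $a_{2s+1}$ governs any block of $s$ consecutive bins. I would extend the closed form with the convention $a_{-1}=1$ for an empty block, which is consistent since $a_{2m-1}=\tfrac13(2^{m+1}+(-1)^m)$ gives $a_{-1}=1$ at $m=0$.

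Next, fix a candidate pair of consecutive summands $a_{i_1},a_{i_2}$ with $i_2-i_1=g$, and let $p,q$ be the bins containing $i_1,i_2$. A decomposition in which these are adjacent summands factors as a legal choice among $b_1,\dots,b_{p-2}$ (everything below $a_{i_1}$ must avoid $b_{p-1}$ and $b_p$), followed by $a_{i_1},a_{i_2}$, followed by a legal choice among $b_{q+2},\dots,b_n$; crucially this decouples and depends only on the bins, not on the position within a bin. By the counting fact the number of such $m$ is $C_n(i_1,i_2)=a_{2p-3}\,a_{2n-2q-1}$. Writing $d=q-p\ge 2$ and summing over admissible $p$ produces the convolution $S(d)=\sum_{p} a_{2p-3}\,a_{2n-2p-2d-1}$, whose two indices sum to the constant $2n-2d-4$. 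Sorting by the parities of $i_1,i_2$ then matches each gap to its contributions: an even gap $g=2j$ arises from (first, first) or (second, second) positions, each with $d=j$, so $G_n(2j)=2S(j)$; an odd gap $g=2j+1$ arises from (first, second) with $d=j$ and from (second, first) with $d=j+1$, giving $G_n(2j+1)=S(j)+S(j+1)$ for $j\ge 2$ and $G_n(3)=S(2)$. The minimal separation $d\ge 2$ is exactly what forces $P(0)=P(1)=P(2)=0$.

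Finally I would substitute the closed forms. Each term of $S(d)$ has dominant part $\tfrac19 2^{\,n-d+1}$ independent of the split, so with $\sim(n-d)$ terms one obtains $S(d)=\tfrac{2(n-d)}{9}2^{\,n-d}(1+o(1))\sim \tfrac{2n}{9}2^{n}\,2^{-d}$, the $(-1)^{\cdot}$ corrections entering only at order $2^{n-d}$ and washing out after normalization. Summing gives $T_n=\sum_g G_n(g)\sim \tfrac{4n}{9}2^n$, which agrees with $a_{2n+1}(\mu_n-1)$. Dividing yields $P(2j)=2^{-j}$ and $P(2j+1)=\tfrac34\,2^{-j}$ for $j\ge 2$, while $P(3)=S(2)/T_n\to \tfrac18$, as claimed. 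I expect the main obstacle to be the bookkeeping in the factorization and parity steps: verifying that the adjacent-summand count genuinely decouples into the two block counts $a_{2p-3}$ and $a_{2n-2q-1}$ independent of within-bin position, and correctly assigning each gap length to its $(\text{parity},d)$ contributions. Once that is pinned down, evaluating the convolutions and passing to the limit is routine.
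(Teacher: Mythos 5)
Your proposal is correct and follows essentially the same route as the paper: both count, for each admissible pair of adjacent summand positions, the number of $m$ by factoring into the product of legal-decomposition counts for the block of bins below and the block above (each equal to an $a_{2t+1}$ by translation invariance), then sort by parity, extract the dominant term $\frac{1}{9}2^{n-d+1}$ from the closed forms, and normalize by $(\mu_n-1)a_{2n+1}\sim\frac{4n}{9}2^n$. Your bin-indexed convolution $S(d)$ is just a tidier bookkeeping of the paper's four parity cases for $(i,g)$, and the resulting limits agree.
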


In \S\ref{sec:recurrencerelations} we derive the recurrence relation and explicit closed form expressions for the terms of the Kentucky-2 sequence, as well as a useful generating function for the number of summands in decompositions. We then prove Theorem \ref{thm:gaussian} on Gaussian behavior in \S\ref{sec:gaussianbehavior}, and Theorem \ref{thm:gapstheorem} on the distribution of the gaps in \S\ref{sec:averagegap}. We end with some concluding remarks and directions for future research in \S\ref{sec:conclusionandfuturework}.

\section{Recurrence Relations and Generating functions}\label{sec:recurrencerelations}

In the analysis  below we constantly use the fact that every positive integer has a unique Kentucky-2 legal decomposition; see \cite{DDKMMV} or Appendix \ref{sec:proofofK2unique} for proofs.

\subsection{Recurrence Relations}

\begin{proposition} For the Kentucky-2 sequence, $a_n = n$ for $1 \le n \le 5$ and
for any $n \geq 5$ we have $a_n = a_{n-2} + 2a_{n-4}$.  Further  for $n \geq 1$ we have \be\label{eq:expterm} a_{2n}\ =\ 2^n \ \ \ {\rm and} \ \ \ a_{2n-1}\ =\ \frac{1}{3}\left(2^{n+1}+(-1)^n\right).\ee  \end{proposition}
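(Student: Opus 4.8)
The plan is to pin down the values $a_n$ directly from the combinatorial legality rule together with the completeness and uniqueness guaranteed by Theorem \ref{K2unique}, and only then read off the recurrence and the closed form. For each $k \ge 0$ let $M_k$ denote the largest integer admitting a legal decomposition all of whose summands lie in the first $k$ bins $b_1, \dots, b_k$ (with $M_0 = 0$). Since adjacent bins cannot both be used and the larger element of bin $b_i$ is $a_{2i}$, the maximal such decomposition takes $a_{2i}$ from every other bin, ending at $b_{k}$; peeling off the top of $b_{k+1}$ and then using the best decomposition available in $b_1, \dots, b_{k-1}$ gives the relation $M_{k+1} = a_{2k+2} + M_{k-1}$. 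The crux of the argument is to show, by strong induction on $k$, that the set of integers representable using only $b_1, \dots, b_k$ is exactly the initial segment $\{0, 1, \dots, M_k\}$. Here completeness (every positive integer has a legal decomposition) is what rules out gaps, while uniqueness prevents any overcounting that would spoil the induction.

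Granting the initial-segment claim, the smallest integer not representable from the first $k$ bins is $M_k + 1$; since the sequence is increasing and no smaller term may be inserted without destroying uniqueness, this forces the first element of the next bin to be $a_{2k+1} = M_k + 1$. Appending $a_{2k+1}$ extends the representable set to $\{0,\dots,M_k\} \cup \{a_{2k+1},\dots, a_{2k+1} + M_{k-1}\}$, the second block arising by combining $a_{2k+1}$ with any decomposition drawn from $b_1, \dots, b_{k-1}$ (bin $b_{k+1}$ is adjacent to $b_k$ and hence incompatible with it). Completeness forces these two blocks to abut, so $a_{2k+2} = a_{2k+1} + M_{k-1} + 1$. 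Substituting $M_k = a_{2k+1} - 1$ and $M_{k-1} = a_{2k-1} - 1$ into these identities yields the two mixed recurrences $a_{2k+2} = a_{2k+1} + a_{2k-1}$ and $a_{2k+3} = a_{2k+2} + a_{2k-1}$. Feeding one into the other collapses them to the single fourth-order recurrence $a_n = a_{n-2} + 2a_{n-4}$ valid for $n \ge 5$ (checked separately for odd and even $n$); the base values $a_1,\dots,a_5 = 1,2,3,4,5$ are verified by hand.

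For the closed form I would exploit the fact that $a_n = a_{n-2} + 2a_{n-4}$ couples only terms of equal parity, so the two subsequences decouple completely. Setting $b_n := a_{2n}$ and $c_n := a_{2n-1}$, each satisfies the second-order recurrence $x_n = x_{n-1} + 2x_{n-2}$, whose characteristic polynomial $x^2 - x - 2 = (x-2)(x+1)$ has roots $2$ and $-1$. Solving with $b_1 = 2,\ b_2 = 4$ gives $a_{2n} = 2^n$, and solving with $c_1 = 1,\ c_2 = 3$ gives $a_{2n-1} = \frac{1}{3}\left(2^{n+1} + (-1)^n\right)$.

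The main obstacle is the inductive initial-segment claim of the first step: everything afterward is bookkeeping, but it is exactly the no-gap property supplied by Theorem \ref{K2unique} that converts the abstract legality rule into the explicit greedy description $a_{2k+1} = M_k + 1$, and this conversion must be justified with care since a naive greedy reading of the rule is not obviously consistent with uniqueness until completeness is invoked.
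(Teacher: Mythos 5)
Your argument follows the same route as the paper: both identify each new sequence element as one more than the largest integer legally representable from the preceding bins, extract the mixed recurrences $a_{2k+2}=a_{2k+1}+a_{2k-1}$ and $a_{2k+3}=a_{2k+2}+a_{2k-1}$, and combine them into $a_n=a_{n-2}+2a_{n-4}$. The initial-segment property you rightly flag as the crux is exactly what the paper's Appendix \ref{sec:proofofK2unique} establishes (and which you may cite via Theorem \ref{K2unique}), and your characteristic-polynomial solution of the two decoupled parity subsequences is an equivalent substitute for the paper's derivation of $a_{2n}=2a_{2n-2}$ and induction on the odd-index closed form.
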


\begin{proof}
Any $a_{2n+1}$ and $a_{2n}$ in the Kentucky-2 sequence is listed because it is the smallest integer that cannot be legally decomposed using the members of $\{a_1, a_2, \dots, a_{2n}\}$  or  $\{a_1, a_2, \dots, a_{2n-1}\}$ respectively: \begin{equation} \underbracket{\ 1,\ 2\ }_{b_1}\ ,\ \underbracket{\    3,\  4 \ }_{b_2}\ ,\ \underbracket{\      5,\ 8 \ }_{b_3}\ ,\ \underbracket{\     11,\  16 \ }_{b_4}\ ,\ \underbracket{\     21,\  32 \ }_{b_5}\ ,\ \underbracket{\     43,\  64 \ }_{b_6}\ ,\ \cdots, \  \underbracket{\     a_{2n-3},\  a_{2n-2}\ }_{b_{n-1}} \ , \ \underbracket{\ a_{2n-1},\ a_{2n}\ }_{b_n}.\end{equation}


As $a_{2n}$ is the largest entry in the bin $b_n$, it is one more than the largest number we can legally write, and thus \begin{equation}a_{2n} \ = \  a_{2n-1} + a_{2(n-2)} + a_{2(n-4)} + \cdots + a_j+1\end{equation} where $a_j = a_2$ if $n$ is odd and $a_j = a_4$ if $n$ is even.  By construction of the sequence we have $a_{2(n-2)} + a_{2(n-4)} + \cdots + a_j+1 = a_{2(n-2)+1} = a_{2n-3}$.  Thus
\begin{equation}\label{evenRR}
a_{2n} \ = \  a_{2n-1} + a_{2n-3}.
\end{equation}

Similarly $a_{2n+1}$ is the smallest entry in bin $b_{n+1}$, so \begin{equation}a_{2n+1} \ = \  a_{2n} + a_{2(n-2)} + a_{2(n-4)} + \cdots + a_j + 1\end{equation} where $a_j = a_2$ if $n$ is odd and $a_j = a_4$ if $n$ is even.  Thus
\begin{equation}\label{oddRR}
a_{2n+1} \ = \   a_{2n} + a_{2n-3}.
\end{equation}
Substituting Equation \eqref{evenRR} into \eqref{oddRR} yields
\begin{equation}\label{finaloddRR}
a_{2n+1} \ = \  a_{2n-1} + 2 a_{2n-3},
\end{equation} and thus for $m \geq 5$ odd we have $a_m = a_{m-2} + 2\cdot a_{m-4}$.

Now  using Equation \eqref{finaloddRR} in \eqref{evenRR}, we have \begin{align}
a_{2n} \ = \  a_{2n-1} + a_{2n-3} \ = \  a_{2n-3}+2\cdot a_{2n-5} + a_{2n-3}\ = \  2(a_{2n-3} + a_{2n-5}).
\end{align}
Shifting the index in Equation \eqref{evenRR} gives\begin{equation}\label{even_double}
a_{2n}\ = \  2 \cdot a_{2n-2}.
\end{equation}
Since $a_2 = 2$ and $a_4 = 4$, together with Equation \eqref{even_double} we now have $a_{2n} = 2^n$ for all $n \geq 1$.  A few algebraic steps then confirm $a_m = a_{m-2} + 2\cdot a_{m-4}$ for $m \geq 6$ even. 

Finally, we prove that $a_{2n-1} = \frac{1}{3}(2^{n+1}+(-1)^n)$  for $n \geq 1$ by induction.  The basis case is immediate as $a_1 = 1$ and $\frac{1}{3}(2^{1+1}+(-1)^1) = \frac{1}{3}(4-1) = 1$. Assume for some $N \geq 1$, $a_{2N-1} = \frac{1}{3}(2^{N+1}+(-1)^N)$.  By Equation \eqref{finaloddRR}, we have \begin{align}
a_{2(N+1)-1} &\ = \  a_{2N+1}\nonumber\\
&\ = \  a_{2N-1}+2\cdot a_{2N-3}\nonumber\\
&\ = \  \frac{1}{3}(2^{N+1}+(-1)^N) + 2\cdot \frac{1}{3}(2^{N-1+1}+(-1)^{N-1})\nonumber\\
&\ = \  \frac{1}{3}(2^{N+1} + (-1)^N + 2^{N+1} + (-1)^{N-1} + (-1)^{N-1})\nonumber\\
&\ = \  \frac{1}{3}(2^{N+2}+(-1)^{N+1}),
\end{align} and thus for all $n \geq 1$ we have $a_{2n-1} = \frac{1}{3}(2^{n+1}+(-1)^n)$.
\end{proof}

\subsection{Counting Integers With Exactly $k$ Summands}

In \cite{KKMW}, Kolo$\breve{{\rm g}}$lu, Kopp, Miller and Wang introduced a very useful combinatorial perspective to attack Zeckendorf decomposition problems. While many previous authors attacked related problems through continued fractions or Markov chains, they instead partitioned the $m \in [F_n, F_{n+1})$ into sets based on the number of summands in their Zeckendorf decomposition. We employ a similar technique here, which when combined with identities about Fibonacci polynomials allows us to easily obtain Gaussian behavior.

Let $p_{n,k}$ denote the number of $m\in[0,a_{2n+1})$ whose legal decomposition contains exactly $k$ summands where $k \geq 0$. We have  $p_{n,0} = 1$ for $n \ge 0$, $p_{0,k} = 0$ for $k > 0$, $p_{1,1}=2$, and $p_{n,k} = 0$ if $k > \lfloor{\frac{n+1}{2}}\rfloor$.  Also, by definition, \begin{equation}\displaystyle\sum_{k=0}^{\lfloor\frac{n+1}{2}\rfloor}p_{n,k} \ = \  a_{2n+1},\end{equation} and we have the following recurrence.

\begin{proposition}\label{recur}
For $p_{n,k}$ as above, we have
\begin{equation}p_{n,k}\ = \ 2p_{n-2,k-1}+p_{n-1,k}\end{equation} for $n \geq 2$ and   $k \leq  \lfloor{\frac{n+1}{2}}\rfloor$.
\end{proposition}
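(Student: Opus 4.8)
The plan is to prove the recurrence by a direct combinatorial argument, partitioning the legal decompositions counted by $p_{n,k}$ according to whether the last available bin $b_n$ contributes a summand. The starting observation is that, by the uniqueness and existence of Kentucky-2 decompositions, $p_{n,k}$ counts exactly the legal decompositions with $k$ summands all of whose terms lie in the bins $b_1,\dots,b_n$ (equivalently, in $\{a_1,\dots,a_{2n}\}$). Indeed, since $a_{2n+1}$ is by construction the smallest integer not legally representable using $\{a_1,\dots,a_{2n}\}$, the integers in $[0,a_{2n+1})$ are precisely those whose unique legal decomposition draws only on the first $n$ bins; the same statement with $n$ replaced by $n-1$ or $n-2$ identifies $p_{n-1,k}$ and $p_{n-2,k-1}$ as the corresponding counts over the first $n-1$ and $n-2$ bins.

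First I would dispose of the case in which bin $b_n$ is not used. Such decompositions are exactly the legal decompositions with $k$ summands supported on $b_1,\dots,b_{n-1}$, and by the identification above these number $p_{n-1,k}$.

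Next I would treat the case in which bin $b_n$ does contribute a summand. Because at most one element may be taken from any single bin, there are exactly $2$ choices for this summand, namely $a_{2n-1}$ or $a_{2n}$. The legality rule forbids choosing from two consecutive bins, so once $b_n$ is used the adjacent bin $b_{n-1}$ becomes unavailable, and the remaining $k-1$ summands must form a legal decomposition supported on $b_1,\dots,b_{n-2}$. Crucially, the constraint decouples: any legal choice on $b_1,\dots,b_{n-2}$ remains legal after adjoining a summand from $b_n$, since the bins $b_{n-2}$ and $b_n$ are not consecutive. Hence this case contributes $2\,p_{n-2,k-1}$, and summing the two disjoint cases yields $p_{n,k}=2p_{n-2,k-1}+p_{n-1,k}$.

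The only delicate point, and really the crux, is the bookkeeping that ties the abstract count of legal bin-patterns to the interval counts $p_{j,\cdot}$: one must be sure that decompositions supported on the first $j$ bins are in bijection with integers in $[0,a_{2j+1})$, and that forbidding the single bin $b_{n-1}$ leaves behind exactly the configurations enumerated by $p_{n-2,k-1}$, with no boundary interference from the gap. Both facts follow from the uniqueness theorem together with the defining property that $a_{2j+1}$ is the least integer not representable from the first $j$ bins. Once these are in place the recurrence is immediate, and the range restriction $k\le\lfloor(n+1)/2\rfloor$ is automatically respected, since it marks precisely where the counts vanish.
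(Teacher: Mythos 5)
Your proposal is correct and follows essentially the same route as the paper: both partition the decompositions of $m\in[0,a_{2n+1})$ according to whether bin $b_n$ contributes a summand, use the two choices within $b_n$ together with the exclusion of $b_{n-1}$ to get the $2p_{n-2,k-1}$ term, and identify the remaining case with $p_{n-1,k}$ via the correspondence between decompositions supported on the first $j$ bins and integers in $[0,a_{2j+1})$. Your explicit attention to why that correspondence holds is a nice touch, but the argument is the same.
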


\begin{proof}
We partition the Kentucky-2 legal decompositions of all $m \in [0,a_{2n+1})$ into two sets, those that have a summand from bin $b_n$ and those that do not.

If we have a legal decomposition $m = a_{\ell_1}+ a_{\ell_2} + \cdots + a_{\ell_k}$ with $a_{\ell_k} \in b_n$, then $a_{\ell_{k-1}} \leq a_{2(n-2)}$ and there are two choices for $a_{\ell_k}$.  The number of legal decompositions of the form $a_{\ell_1}+ a_{\ell_2} + \cdots + a_{\ell_{k-1}}$ with $a_{\ell_{k-1}} \leq a_{2(n-2)}$ is $p_{n-2,k-1}$ (note the answer is independent of which value $a_{\ell_k} \in b_n$ we have).  Thus the number of legal decompositions of $m$ containing exactly $k$ summands with largest summand in bin $b_n$ is $2\cdot p_{n-2,k-1}$.

If $m \in [0, a_{2n+1})$ does not have a summand from bin $b_n$ in its decomposition, then $m \in [0, a_{2n-1})$, and by definition the number of such $m$ with exactly $k$ summands in a legal decomposition is $p_{n-1,k}$.

Combining these two cases yields
\begin{equation}p_{n,k}\ = \ 2p_{n-2,k-1}+p_{n-1,k},\end{equation} completing the proof.\end{proof}

This recurrence relation allows us to compute a closed-form expression for $F(x,y)$, the generating function of the $p_{n,k}$'s.

\begin{proposition}
Let  \be F(x,y)\ :=\ \sum_{n,k\geq0}p_{n,k} x^ny^k\ee be the generating function of the $p_{n,k}$'s arising from Kentucky-2 legal decompositions. Then
\be\label{cform}
F(x,y) \ = \  \frac{1+2xy}{1-x-2x^2y}.  \ee
\end{proposition}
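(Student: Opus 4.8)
The plan is to derive the closed form directly from the recurrence in Proposition \ref{recur} by encoding that recurrence as a relation among the coefficients of $F(x,y)$, then summing. First I would write $F(x,y) = \sum_{n,k\geq 0} p_{n,k} x^n y^k$ and isolate the terms where the recurrence $p_{n,k} = 2p_{n-2,k-1} + p_{n-1,k}$ actually applies, namely $n \geq 2$ (and $k \geq 1$, since the $2p_{n-2,k-1}$ term requires $k \geq 1$). The strategy is to multiply the recurrence by $x^n y^k$ and sum over the valid range, recognizing each of the three resulting double sums as a shifted copy of $F(x,y)$ itself: the sum of $p_{n,k}x^n y^k$ is essentially $F$, the sum of $2p_{n-2,k-1}x^n y^k$ factors as $2x^2 y\, F(x,y)$, and the sum of $p_{n-1,k}x^n y^k$ factors as $x\, F(x,y)$.

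The delicate part of the bookkeeping is the boundary terms, since the recurrence only holds for $n \geq 2$. I would handle this by explicitly peeling off the low-order terms corresponding to $n = 0$ and $n = 1$. Using the stated initial data $p_{n,0} = 1$ for $n \geq 0$, $p_{0,k} = 0$ for $k > 0$, and $p_{1,1} = 2$, the $n=0$ contribution to $F$ is $\sum_{n\geq 0} p_{n,0} x^n = 1/(1-x)$ (the whole $k=0$ column) plus the isolated term $p_{1,1}xy = 2xy$. Concretely, I would write $F(x,y) = \sum_{k=0}^\infty \sum_{n=0}^\infty p_{n,k} x^n y^k$, substitute the recurrence in the region where it is valid, and carefully add back the finitely many terms where it is not. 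After collecting, this yields a linear equation of the form $F(x,y) = (\text{explicit polynomial}) + (x + 2x^2 y) F(x,y)$, which I then solve algebraically for $F$.

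The one genuine obstacle is getting the numerator exactly right, and in particular confirming that the leftover boundary terms collapse to precisely $1 + 2xy$ rather than some more complicated polynomial. This is where the initial conditions must be used with care: one must check that the $k=0$ slice (all equal to $1$, giving the geometric series $1/(1-x)$) interacts correctly with the $1-x$ that emerges from the $x F(x,y)$ term, and that the anomalous value $p_{1,1} = 2$ (which does not come from the generic recurrence because $p_{-1,0}$ and $p_{0,1}$ would otherwise feed it) accounts for the $2xy$ in the numerator. I would verify the final formula by cross-multiplying: expanding $(1 - x - 2x^2 y)F(x,y) = 1 + 2xy$ and matching coefficients of $x^n y^k$ reproduces exactly the recurrence together with all the initial conditions, which serves as an independent check that no boundary term has been dropped or double-counted. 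Once that identity is confirmed, solving for $F(x,y)$ gives the claimed expression
\[
F(x,y) \ = \ \frac{1+2xy}{1-x-2x^2 y},
\]
completing the proof.
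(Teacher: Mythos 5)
Your proposal is correct and follows essentially the same route as the paper: multiply the recurrence $p_{n,k}=2p_{n-2,k-1}+p_{n-1,k}$ by $x^ny^k$, sum, recognize the shifted copies as $2x^2y\,F$ and $xF$, and check that the boundary/initial data contribute exactly $1+2xy$, yielding $F=2x^2yF+xF+2xy+1$. The paper compresses all of this into "after some straightforward algebra"; your cross-multiplication check that $(1-x-2x^2y)F(x,y)=1+2xy$ coefficient-by-coefficient is precisely the right way to make that algebra rigorous.
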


\begin{proof}
Noting that $p_{n,k} = 0$ if either $n<0$ or $k<0$, using  explicit values of $p_{n,k}$ and the recurrence relation from Proposition \ref{recur}, after some straightforward algebra we obtain
\begin{equation} F(x,y) \ = \  2x^2yF(x,y)+xF(x,y) +2xy+1. \end{equation}
From this, Equation \eqref{cform} follows.
\end{proof}

While the combinatorial vantage of \cite{KKMW} has been fruitfully applied to a variety of recurrences (see \cite{MW1,MW2}), their simple proof of Gaussianity does not generalize. The reason is that for the Fibonacci numbers (which are also the $(1,1)$-Generacci numbers) we have an explicit, closed form expression for the corresponding $p_{n,k}$'s, which greatly facilitate the analysis. Fortunately for us a similar closed form expression exists for Kentucky-2 decompositions.

\begin{proposition}\label{Kentucky-Pnk-explicit} Let $p_{n,k}$ be the number of integers in $[0,a_{2n+1})$ that have exactly $k$ summands in their Kentucky-2 legal decomposition. For all $k \geq 1$ and $n \geq 1 +2(k-1)$, we have \begin{equation}p_{n,k} \ = \  2^k {{n-(k-1)}\choose{k}}.\end{equation}
\end{proposition}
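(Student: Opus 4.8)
The plan is to prove the closed form
\begin{equation}
p_{n,k} \ = \ 2^k \binom{n-(k-1)}{k}
\end{equation}
by induction, using the recurrence $p_{n,k} = 2p_{n-2,k-1} + p_{n-1,k}$ from Proposition \ref{recur} together with the boundary data already recorded (namely $p_{n,0}=1$, $p_{1,1}=2$, and $p_{n,k}=0$ for $k > \lfloor(n+1)/2\rfloor$). First I would verify the base cases: for $k=1$ the formula predicts $p_{n,1} = 2\binom{n}{1} = 2n$, which I would check directly against small values and confirm is consistent with the recurrence $p_{n,1} = 2p_{n-2,0} + p_{n-1,1} = 2 + p_{n-1,1}$ anchored at $p_{1,1}=2$. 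Similarly the diagonal edge case $n = 1+2(k-1) = 2k-1$ gives $p_{2k-1,k} = 2^k\binom{k}{k} = 2^k$, which should match the count of decompositions using the maximum number of summands.

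The main induction would proceed on $n$ (or on $n+k$), assuming the formula holds for the two predecessors $(n-2,k-1)$ and $(n-1,k)$ appearing on the right-hand side of the recurrence, and then checking that the claimed closed form satisfies it. Concretely I would substitute the inductive hypotheses into $p_{n,k} = 2p_{n-2,k-1} + p_{n-1,k}$ to get
\begin{equation}
p_{n,k} \ = \ 2 \cdot 2^{k-1}\binom{(n-2)-(k-2)}{k-1} + 2^k \binom{(n-1)-(k-1)}{k},
\end{equation}
which simplifies to
\begin{equation}
p_{n,k} \ = \ 2^k\left[\binom{n-k}{k-1} + \binom{n-k}{k}\right],
\end{equation}
and then invoke Pascal's rule $\binom{n-k}{k-1} + \binom{n-k}{k} = \binom{n-k+1}{k} = \binom{n-(k-1)}{k}$ to recover the desired expression. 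This is the crux of the calculation, and pleasingly it reduces to a single clean application of Pascal's identity.

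The step I expect to require the most care is not the algebra but the bookkeeping of ranges: I must ensure that whenever I apply the inductive hypothesis to $p_{n-2,k-1}$ and $p_{n-1,k}$, the hypotheses of the proposition ($k'\ge 1$ and $n' \ge 1+2(k'-1)$) are actually satisfied, and that the edge of the valid region $n = 2k-1$ is handled separately rather than by the recurrence (since there the term $p_{n-1,k} = p_{2k-2,k}$ has $k > \lfloor(n')+1)/2\rfloor$ and thus vanishes, so the formula must be seeded along this diagonal). I would therefore set up the induction so that the diagonal $n=2k-1$ serves as the base case for each fixed $k$ and the recurrence advances $n$; checking that the vanishing boundary terms are consistent with the binomial coefficient conventions (e.g.\ $\binom{m}{k}=0$ when $m<k$) is the only genuine subtlety. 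Everything else is routine.
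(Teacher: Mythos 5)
Your induction is correct, but it takes a genuinely different route from the paper. The paper proves the formula directly and combinatorially: a legal decomposition with $k$ summands is determined by a choice of $k$ bins, no two adjacent, among $b_1,\dots,b_n$, together with one of the two elements in each chosen bin; writing the bin indices via gap variables reduces the count to the number of non-negative integer solutions of $y_1+\cdots+y_{k+1}=n-k-(k-1)$, which is $\binom{n-(k-1)}{k}$ by stars and bars, and the factor $2^k$ accounts for the within-bin choices. Your argument instead verifies the closed form against the recurrence $p_{n,k}=2p_{n-2,k-1}+p_{n-1,k}$ of Proposition \ref{recur} via Pascal's rule, and your bookkeeping is sound: the convention $\binom{m}{k}=0$ for $m<k$ makes the formula consistent with the vanishing of $p_{2k-2,k}$, so the diagonal $n=2k-1$ either seeds the induction or falls out of the recurrence, and the $k=1$ row anchors correctly at $p_{1,1}=2$. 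What each approach buys: the paper's bijective count explains \emph{why} the answer is a binomial coefficient and needs no prior knowledge of the recurrence for $p_{n,k}$, while your verification is shorter on algebra and requires only that one guess the formula; on the other hand the direct count is the one that transfers cleanly to the general $(s,b)$-Generacci setting, where the analogous gap-variable computation again yields a single binomial coefficient, whereas an induction must be re-run against a new recurrence each time.
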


\begin{proof}
We are counting decompositions of the form $a^\prime_{\ell_1} + \cdots + a^\prime_{\ell_k}$ where $a^\prime_{\ell_i} \in b_{\ell_i} = \{a_{2\ell_i - 1}, a_{2\ell_i}\}$ and $\ell_i \leq n$. Define $x_1 := \ell_1-1$ and $x_{k+1} := n-\ell_{k}$.  For $2 \leq i \leq k$, define $x_i :=\ell_i - \ell_{i-1}-1$.  We have
\begin{equation}x_1 + 1 + x_2 + 1 + x_3 + 1 +\cdots + x_k + 1 + x_{k+1} \ = \  n.\end{equation}

We change variables to rewrite the above. Essentially what we are doing is replacing the $x$'s with new variables to reduce our Diophantine equation to a standard form that has been well-studied. As we have a legal decomposition, our bins must be separated by at least one and thus $x_i \ge 1$ for $2 \le i \le k-1$ and $x_1, x_k \ge 0$. We remove these known gaps in our new variables by setting $y_1:= x_1$, $y_{k+1}:=x_{k+1}$ and $y_i:=x_i-1$ for $2 \leq i \leq k$, which gives
\begin{align}\label{eq:yversioncookieproblem}
y_1 + y_2 + \cdots + y_{k} + y_{k+1} &\ = \  x_1 + (x_2-1) + \cdots + (x_k-1) + x_{k+1}\nonumber\\
&\ = \  n-k-(k-1).
\end{align} Finding the number of non-negative integral solutions to this Diophantine equation has many names (the Stars and Bars Problem, Waring's Problem, the Cookie Problem). As the number of solutions to $z_1 + \cdots + z_P = C$ is $\ncr{C+P-1}{P-1}$ (see for example \cite{MT-B,Na}, or \cite{MBD} for a proof and an application of this identity in Bayesian analysis), the number of solutions to Equation \eqref{eq:yversioncookieproblem} is given by the binomial coefficient \begin{equation}\ncr{n-k-(k-1)+k}{k} \ = \ \ncr{n-(k-1)}{k}. \end{equation}
As there are two choices for each $a^\prime_{\ell_i}$, we have $2^k$ legal decompositions whose summands are from the bins $\{b_{\ell_1}, b_{\ell_2}, \dots, b_{\ell_k}\}$ and thus \begin{equation}p_{n,k} \ = \  2^k \ncr{n-(k-1)}{k}.\end{equation}
\end{proof}

\section{Gaussian Behavior}\label{sec:gaussianbehavior}



Before launching into our proof of Theorem \ref{thm:gaussian}, we provide some numerical support in Figure \ref{fig:kentuckyplot}. We randomly chose 200,000 integers from $[0,10^{600})$. We observed a mean number of summands  of 666.899, which fits beautifully with the predicted value of 666.889; the standard deviation of our sample was 12.154, which is in excellent agreement with the prediction of 12.176.

\begin{center}
\begin{figure}
\includegraphics[scale=1]{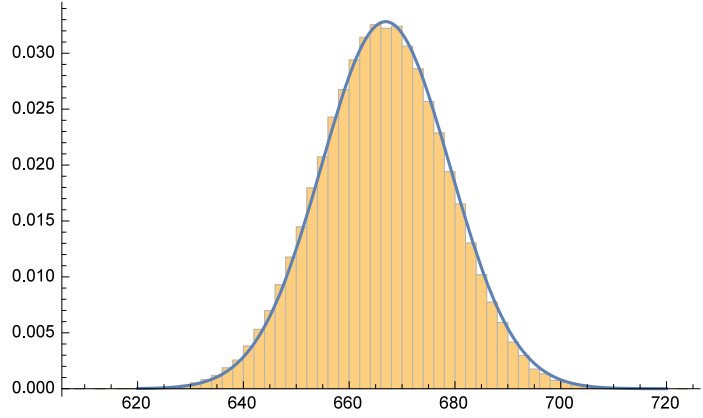}
\caption{\label{fig:kentuckyplot} The distribution of the number of summands in Kentucky-2  legal decompositions for 200,000 integers from $[0, 10^{600})$. }
\end{figure}
\end{center}

We split Theorem \ref{thm:gaussian} into three parts: a proof of our formula for the mean, a proof of our formula for the variance, and a proof of Gaussian behavior. We isolate the first two as separate propositions; we will prove these after first deriving some useful properties of the generating function of the $p_{n,k}$'s.


\begin{prop}
\label{prop:mean}
The mean number of summands in the Kentucky-2 legal decompositions for integers in $[0, a_{2n+1})$ is
\begin{equation}
  \mu_n\ = \ \frac{n}{3}+\frac{2}{9} + O\left(\frac{n}{2^n}\right).
   \end{equation}
\end{prop}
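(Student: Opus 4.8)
The plan is to extract the mean directly from the generating function $F(x,y)=\frac{1+2xy}{1-x-2x^2y}$ by differentiating with respect to $y$ and specializing at $y=1$. Recall that $\sum_k p_{n,k}=a_{2n+1}$ is the total number of integers in $[0,a_{2n+1})$, so the mean number of summands is $\mu_n=\frac{1}{a_{2n+1}}\sum_k k\,p_{n,k}$. The numerator $\sum_k k\,p_{n,k}$ is precisely the coefficient of $x^n$ in $g(x):=\frac{\partial F}{\partial y}(x,1)$. Thus the first step is to compute $g(x)$ explicitly, which is a routine quotient-rule calculation giving a rational function in $x$ alone with denominator $(1-x-2x^2)^2$.

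Next I would factor the denominator. Since $1-x-2x^2=(1-2x)(1+x)$, the function $g(x)$ has poles at $x=1/2$ and $x=-1$, both of order two. I would then carry out a partial fraction decomposition into terms of the form $\frac{A}{(1-2x)^2}$, $\frac{B}{1-2x}$, $\frac{C}{(1+x)^2}$, and $\frac{D}{1+x}$. Extracting the coefficient of $x^n$ from each piece uses the standard expansions $\frac{1}{(1-2x)^2}=\sum_n (n+1)2^n x^n$, $\frac{1}{1-2x}=\sum_n 2^n x^n$, $\frac{1}{(1+x)^2}=\sum_n (n+1)(-1)^n x^n$, and $\frac{1}{1+x}=\sum_n (-1)^n x^n$. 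Collecting these yields $\sum_k k\,p_{n,k}$ as an exact closed form of the shape $(\alpha n+\beta)2^n+(\gamma n+\delta)(-1)^n$, where the dominant term grows like $n\,2^n$.

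The final step is to divide by $a_{2n+1}=\frac{1}{3}(2^{n+2}+(-1)^{n+1})$ from the proposition on recurrence values. Writing $a_{2n+1}=\frac{4}{3}2^n\bigl(1+O(4^{-n})\bigr)$, the ratio is $\mu_n=\frac{(\alpha n+\beta)2^n+O(n)}{\frac{4}{3}2^n+O(1)}$. I would expand $\frac{1}{a_{2n+1}}$ as a geometric series in $(-1)^{n+1}2^{-n-2}$ to control the error; the $(-1)^n$ and constant contributions from the numerator, once divided by the $2^n$-sized denominator, are absorbed into an $O(n/2^n)$ term, leaving the leading behavior $\frac{n}{3}+\frac{2}{9}+O(n/2^n)$. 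The main obstacle is purely bookkeeping: the partial fraction constants and the final asymptotic expansion must be tracked carefully enough to pin down both the $\frac{n}{3}$ slope and the exact constant $\frac{2}{9}$, since an arithmetic slip in the order-two residues would corrupt the constant term without changing the leading order. There is no conceptual difficulty beyond verifying that the subleading $(-1)^n$ oscillations and the denominator's correction genuinely fall into the claimed $O(n/2^n)$ error.
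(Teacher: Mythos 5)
Your plan is correct and would yield the stated asymptotic, but it follows a genuinely different route from the paper's main proof. The paper first rewrites $g_n(y)$ in terms of Fibonacci polynomials, $g_n(y)=(\sqrt{2y})^{n+1}F_{n+2}(1/\sqrt{2y})$, and then extracts $\mu_n=g_n'(1)/g_n(1)$ from the derivative identity and the Binet-type formula for $F_n(x)$; the appendix gives a second proof that partial-fractions $F(x,y)$ in $x$ with $y$-dependent roots $x_1(y),x_2(y)$ and only then differentiates in $y$. You instead differentiate first: $\frac{\partial F}{\partial y}(x,1)=\frac{2x}{(1-x-2x^2)^2}=\frac{2x}{(1-2x)^2(1+x)^2}$, so the partial fraction constants are pure numbers ($A=\tfrac{4}{9}$, $B=-\tfrac{4}{27}$, $C=-\tfrac{2}{9}$, $D=-\tfrac{2}{27}$ in your notation) and you get the exact closed form $\sum_k k\,p_{n,k}=\tfrac{4}{9}(n+1)2^n-\tfrac{4}{27}2^n-\tfrac{2}{9}(n+1)(-1)^n-\tfrac{2}{27}(-1)^n$ with no Fibonacci-polynomial machinery and no $y$-dependent roots to track. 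This is arguably the most elementary of the three arguments and gives an exact formula for the numerator rather than just an asymptotic; what it gives up is reusability, since the paper's Fibonacci-polynomial identities are reused verbatim for the variance and for the moment generating function in the Gaussian proof, whereas your route would require differentiating $F$ twice in $y$ and redoing the (now messier) partial fractions for each higher moment. One small correction: the relative error in $a_{2n+1}=\tfrac{4}{3}2^n\bigl(1+(-1)^{n+1}2^{-n-2}\bigr)$ is $O(2^{-n})$, not $O(4^{-n})$; this does not affect your conclusion, since $\tfrac{n}{3}\cdot O(2^{-n})$ is still absorbed into the claimed $O(n/2^n)$.
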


\begin{prop}
\label{prop:var}
The variance $\sigma_n^2$ of $Y_n$ (from Theorem \ref{thm:gaussian}) is
\begin{equation}
\sigma_n^2 \ = \   \frac{2n}{27}+\frac{8}{81}+O\left(\frac{n^2}{2^n}\right).
\end{equation}
\end{prop}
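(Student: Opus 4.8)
The plan is to extract both moments from the generating function $F(x,y)=\frac{1+2xy}{1-x-2x^2y}$ by treating $y$ as the variable that tracks the number of summands. For each fixed $n$, the relevant object is the coefficient of $x^n$ in $F(x,y)$, which is a polynomial in $y$ whose coefficients are the $p_{n,k}$; evaluating at $y=1$ recovers $a_{2n+1}=\sum_k p_{n,k}$, differentiating once and setting $y=1$ gives $\sum_k k\,p_{n,k}$, and differentiating twice gives $\sum_k k(k-1)p_{n,k}$. First I would define $g(x):=[x^n]F(x,y)\big|_{y=1}$ and more generally work with the univariate generating functions $\sum_n\big(\sum_k p_{n,k}\big)x^n$, $\sum_n\big(\sum_k k\,p_{n,k}\big)x^n$, and $\sum_n\big(\sum_k k(k-1)p_{n,k}\big)x^n$, each obtained from $F(x,y)$ by applying $\partial_y$ the appropriate number of times and then setting $y=1$. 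Since $\mu_n = \frac{1}{a_{2n+1}}\sum_k k\,p_{n,k}$, I would set up the ratio $M_n:=\sum_k k\,p_{n,k}$ over $a_{2n+1}$ and read off its asymptotics.

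The concrete computation runs as follows. Setting $y=1$ gives $F(x,1)=\frac{1+2x}{1-x-2x^2}=\frac{1+2x}{(1-2x)(1+x)}$, whose partial-fraction decomposition yields $a_{2n+1}=[x^n]F(x,1)$ with a clean closed form dominated by the $\frac{1}{1-2x}$ pole (consistent with $a_{2n+1}\sim\frac23 2^{n+1}$). For the mean, I compute $\partial_y F(x,y)$, evaluate at $y=1$, and again decompose into partial fractions. The key structural feature is that $\partial_y F|_{y=1}$ has a \emph{double} pole at $x=\tfrac12$: differentiating the denominator $1-x-2x^2y$ in $y$ produces a factor $-2x^2$ upstairs and raises the power of $(1-x-2x^2y)$ in the denominator to two. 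That double pole is exactly what makes $\sum_k k\,p_{n,k}$ grow like $n\cdot 2^n$ rather than $2^n$, so that the ratio $\mu_n=M_n/a_{2n+1}$ grows linearly in $n$. Extracting the coefficient of $x^n$ from a double pole $\frac{1}{(1-2x)^2}$ contributes a term proportional to $(n+1)2^n$, and tracking the constants through the partial-fraction expansion produces the leading $\frac{n}{3}$ together with the constant $\frac29$; the subdominant pole at $x=-1$ contributes only $O(n/2^n)$ after dividing by $a_{2n+1}\asymp 2^n$. For the variance I use $\sigma_n^2 = \frac{1}{a_{2n+1}}\sum_k k(k-1)p_{n,k} + \mu_n - \mu_n^2$, where $\sum_k k(k-1)p_{n,k}=[x^n]\partial_y^2 F|_{y=1}$ now carries a \emph{triple} pole at $x=\tfrac12$, contributing a term of order $n^2 2^n$; the leading $n^2$ pieces must cancel against $\mu_n^2$, leaving the linear-in-$n$ variance $\frac{2n}{27}$ plus the constant $\frac{8}{81}$.

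The main obstacle I anticipate is bookkeeping rather than conceptual: carrying out the two- and three-fold partial-fraction expansions accurately enough to pin down the constant terms $\frac29$ and $\frac{8}{81}$, and verifying that the leading $n^2$ contributions to the raw second moment and to $\mu_n^2$ cancel precisely so that $\sigma_n^2$ is genuinely linear in $n$. This cancellation is forced by general principles (the variance of a sum of weakly-dependent indicator-type contributions should scale like $n$), but it must be checked explicitly here, and any arithmetic slip in the coefficient of the triple pole would corrupt both the leading coefficient $\frac{2}{27}$ and the constant. A secondary technical point is justifying that the contributions from the pole at $x=-1$ (and from the polynomial part coming from the numerator $1+2xy$) are genuinely of the stated $O(n/2^n)$ and $O(n^2/2^n)$ size after normalization by $a_{2n+1}$; this is routine since $|-1|<2$ forces geometric suppression, but it is what guarantees the error terms in both propositions. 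I would organize the write-up by first recording $F(x,1)$, $\partial_y F|_{y=1}$, and $\partial_y^2 F|_{y=1}$ as explicit rational functions, then stating their partial-fraction decompositions, and only then reading off the coefficient asymptotics and assembling $\mu_n$ and $\sigma_n^2$.
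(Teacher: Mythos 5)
Your plan is correct, and it will work; it is essentially the generating-function route the paper relegates to Appendix B, but with the two key operations performed in the opposite order. The appendix first does the partial-fraction decomposition of $F(x,y)=\frac{1+2xy}{1-x-2x^2y}$ in $x$ \emph{with $y$ kept symbolic} (the roots $x_{1,2}(y)=-\frac{1}{4y}(1\pm\sqrt{1+8y})$), extracts an exact closed form for $g_n(y)=[x^n]F(x,y)$, and only then differentiates in $y$ at $y=1$ and feeds the result into Lemma \ref{lem:mu}; a byproduct is an independent derivation of the explicit formula for Fibonacci polynomials, which powers the paper's main (Section 3) proof. You instead differentiate $F$ in $y$ first, set $y=1$, and then do partial fractions in $x$, reading off the moments from the double pole (for $\sum_k k\,p_{n,k}\sim n\,2^n$) and triple pole (for $\sum_k k(k-1)p_{n,k}\sim n^2 2^n$) at $x=\tfrac12$, with the pole at $x=-1$ giving the geometrically suppressed error terms. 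Both orders are legitimate and give identical answers; yours avoids ever writing down $g_n(y)$ for general $y$ at the cost of managing higher-order poles, while the paper's order yields a single exact formula from which all moments (and the moment generating function needed for the Gaussian result) follow. Two small cautions as you execute the computation: the constant term $\frac29$ in $\mu_n$ is \emph{not} visible from the double pole alone (that gives $\frac{n+1}{3}$); you must also keep the simple-pole residue at $x=\tfrac12$ coming from both $\frac{2x}{D}$ and the $D^{-2}$ term, so carry the full partial fraction at $x=\tfrac12$ to two orders. Likewise for the variance the $n^2$ cancellation against $\mu_n^2$ is exact only if the subleading (linear-in-$n$) coefficients of the triple- and double-pole contributions are tracked precisely, so it is worth verifying the final constants numerically against the paper's closed form $\sigma_n^2=\frac{2^{2n+5}(4+3n)-2(8+3n)-2^{n+2}(-1)^n(28+36n+9n^2)}{81(2^{n+2}-(-1)^n)^2}$ before trusting the $\frac{8}{81}$.
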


\subsection{Mean and Variance}\label{sec:mv}

Recall $Y_n$ is the random variable denoting the number of summands in the unique Kentucky-2 decomposition of an integer chosen uniformly from $[0, a_{2n+1})$, and $p_{n,k}$ denotes the number of integers in  $[0,a_{2n+1})$ whose legal decomposition contains exactly $k$ summands. The following lemma yields expressions for the mean and variance of $Y_n$ using a generating function for the $p_{n,k}$'s; in fact, it is this connection of derivatives of the generating function to moments that make the generating function approach so appealing. The proof is standard (see for example \cite{DDKMMV}).

\begin{lem}  \label{lem:mu}  \cite[Propositions 4.7, 4.8]{DDKMMV}  Let $F(x,y): =  \sum_{n, k \geq 0} p_{n,k} x^n y^k$ be the generating function of   $p_{n,k}$, and let  $g_n(y): =  \sum_{k=0}^n p_{n,k} y^k$ be the coefficient of $x^n$ in $F(x,y)$. Then the mean of $Y_n$ is
\begin{equation}
\mu_n \ = \  \frac{g_n'(1)}{g_n(1)},
\end{equation}
and the  variance of $Y_n$  is
\begin{equation}
\sigma_n^2 \ = \   \frac{\frac{d}{dy}(yg_n'(y))|_{y=1}}{g_n(1)} - \mu_n^2.
\end{equation}
\end{lem}

In our analysis  our closed form expression of $p_{n,k}$ as a binomial coefficient is crucial in obtaining simple closed form expressions for the needed quantities. We are able to express these needed quantities in terms of the \textbf{Fibonacci polynomials}, which are defined recursively as follows:
\begin{equation}
F_0(x) \ = \  0, \ F_1(x) \ = \  1, \ F_2(x) \ = \  x,
\end{equation}
and for $n \geq 3$
\begin{equation}
F_n(x) \ = \  xF_{n-1}(x) + F_{n-2}(x).
\end{equation}
For $n \geq 3$, the Fibonacci polynomial\footnote{Note that $F_n(1)$ gives the standard Fibonacci sequence.} $F_n(x)$  is given by
\begin{equation}
\label{eq:fibsum}
F_n(x) \ = \   \sum_{j=0}^{\lr{\frac{n-1}{2}}}  {n-j-1 \choose j} x^{n-2j-1},
\end{equation}
and also has the explicit formula
\begin{equation}
\label{eq:fibexp}
F_n(x) \ = \  \frac{(x+\sqrt{x^2+4})^n - (x-\sqrt{x^2+4})^n}{2^n \sqrt{x^2+4}}.
\end{equation}
The derivative of $F_n(x)$ is given by
\be
\label{eq:fibDeriv}
F_n'(x) \ = \ \frac{2nF_{n-1}(x) + (n-1)xF_n(x)}{x^2+4}.
\ee
For a reference on Fibonacci polynomials and the formulas given above (which follow immediately from the definitions and straightforward algebra), see \cite{Kos}.

\begin{prop}
For $n \geq 3$
\be
\label{eq:gnFib}
g_n(y)  \ = \  (\sqrt{2y})^{n+1} F_{n+2}\left(\frac{1}{\sqrt{2y}}\right).
\ee
\end{prop}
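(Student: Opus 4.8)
The plan is to prove the identity by direct substitution, reducing both sides to the same explicit finite sum and matching coefficients; there is no genuine analytic content here, only careful bookkeeping of indices. First I would unwind the definition $g_n(y) = \sum_{k\geq 0} p_{n,k} y^k$ and feed in the closed form from Proposition \ref{Kentucky-Pnk-explicit}. Although that proposition is stated for $k \geq 1$, the expression $2^k \ncr{n-(k-1)}{k}$ also returns the correct value $p_{n,0} = 1$ when $k=0$ (since $\ncr{n+1}{0} = 1$), so the single formula covers every term. This yields
\begin{equation}
g_n(y) \ = \ \sum_{k\geq 0} 2^k \ncr{n+1-k}{k} y^k \ = \ \sum_{k\geq 0} \ncr{n+1-k}{k} (2y)^k,
\end{equation}
where the sum terminates automatically: the binomial coefficient $\ncr{n+1-k}{k}$ vanishes once $k > (n+1)/2$, which matches exactly the known vanishing $p_{n,k} = 0$ for $k > \lr{(n+1)/2}$.

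Next I would write down the right-hand side using the explicit sum \eqref{eq:fibsum} for the Fibonacci polynomials, applied with index $n+2$ (legal since $n \geq 3$ gives $n+2 \geq 3$). This gives
\begin{equation}
F_{n+2}(x) \ = \ \sum_{j=0}^{\lr{\frac{n+1}{2}}} \ncr{n+1-j}{j} x^{n+1-2j}.
\end{equation}
Substituting $x = 1/\sqrt{2y}$ and multiplying through by $(\sqrt{2y})^{n+1} = (2y)^{(n+1)/2}$, each term acquires the factor $(2y)^{(n+1)/2}(2y)^{-(n+1-2j)/2} = (2y)^{j}$, so that
\begin{equation}
(\sqrt{2y})^{n+1} F_{n+2}\!\left(\frac{1}{\sqrt{2y}}\right) \ = \ \sum_{j=0}^{\lr{\frac{n+1}{2}}} \ncr{n+1-j}{j} (2y)^{j}.
\end{equation}
Comparing this with the expression for $g_n(y)$ above (relabeling $j$ as $k$) gives the claimed equality term by term.

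The only point requiring care — and the step I would flag as the one most likely to hide an error — is the matching of the summation ranges and the handling of the half-integer powers of $2y$: one must confirm that the upper limit $\lr{(n+1)/2}$ produced by the Fibonacci-polynomial formula coincides with the range over which $p_{n,k}$ is nonzero, and that the odd-versus-even parity of $n+1-2j$ does not spoil the cancellation of the $\sqrt{2y}$ factors. Because $(\sqrt{2y})^{n+1}$ and $x^{n+1-2j}$ carry complementary half-integer exponents that sum to the integer $2j$, every surviving term is a genuine power of $2y$, so no square roots remain and the two polynomial identities in $y$ agree. I would therefore present the argument as a clean chain of equalities, noting explicitly that the $k=0$ term is absorbed by the unified formula and that the restriction $n \geq 3$ is exactly what legitimizes the use of \eqref{eq:fibsum} for $F_{n+2}$.
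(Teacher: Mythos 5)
Your proof is correct and follows essentially the same route as the paper: both arguments substitute the closed form $p_{n,k} = 2^k\binom{n+1-k}{k}$ from Proposition \ref{Kentucky-Pnk-explicit} and identify the resulting sum with the expansion \eqref{eq:fibsum} of $F_{n+2}$. The only cosmetic difference is that you work directly with the coefficient $g_n(y)$ while the paper performs the same index bookkeeping inside the full bivariate series $F(x,y)$ before extracting the coefficient of $x^n$.
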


\begin{proof}
By Proposition \ref{Kentucky-Pnk-explicit}, we have
\begin{equation}
\ba{lcl}
F(x,y)  & \ = \  &  \displaystyle  \sum_{n=0}^\infty \sum_{k=0}^\infty p_{n,k} x^n y^k \ = \  \sum_{n=0}^\infty \sum_{k=0}^n 2^k {n-k+1 \choose k} x^n y^k.
\ea
\end{equation}
Thus, using Equation \eqref{eq:fibsum} we find
 \begin{align}
F(x,y) & \ = \   \displaystyle   \frac{1}{x^2\sqrt{2y}} \sum_{n=0}^\infty \sum_{k=0}^{n+2}   {(n+2)-k-1 \choose k} \left(\frac{1}{\sqrt{2y}}\right)^{(n+2)-2k-1}( x\sqrt{2y})^{n+2} \nonumber\\
&  \ = \     \displaystyle \frac{1}{x^2\sqrt{2y}} \sum_{n=0}^\infty F_{n+2}\left(\frac{1}{\sqrt{2y}}\right) ( x\sqrt{2y})^{n+2}  \ = \  \displaystyle  \sum_{n=0}^\infty F_{n+2}\left(\frac{1}{\sqrt{2y}}\right) (\sqrt{2y})^{n+1} x^n,
\end{align} completing the proof.
\end{proof}

In Appendix \ref{app:gfproof} we provide alternate proofs of Proposition \ref{prop:mean}, Proposition \ref{prop:var} and Theorem \ref{thm:gaussian} that follow directly from the recurrence for $p_{n,k}$ and properties of generating functions, as these arguments generalize better to other recurrences (this is similar to the difference in proofs in \cite{KKMW} and \cite{MW1}, where the first exploits the closed form expressions while the second argues more generally). In doing so, we find another formula for $g_n(y)$. This formula gives an independent derivation of the explicit formula for the Fibonacci polynomials, Equation \eqref{eq:fibexp}.

\begin{proof}[Proof of Proposition \ref{prop:mean}]
By Lemma \ref{lem:mu}, the mean of $Y_n$ is $g_n'(1)/g_n(1)$. Calculations of derivatives using Equations \eqref{eq:fibDeriv} and \eqref{eq:gnFib} give
\begin{align}
\displaystyle \frac{g_n'(1)}{g_n(1)}  & \ = \    \displaystyle \frac{(n+1)(\sqrt{2})^{n-1}F_{n+2}(\frac{1}{\sqrt{2}})}{F_{n+2}(\frac{1}{\sqrt{2}})(\sqrt{2})^{n+1}} - \frac{(\sqrt{2})^{n-2}F_{n+2}'(\frac{1}{\sqrt{2}})}{F_{n+2}(\frac{1}{\sqrt{2}})(\sqrt{2})^{n+1}} \nonumber\\
&\ = \  \frac{n+1}{2} - \frac{1}{(\sqrt{2})^3} \frac{F_{n+2}'\left(\frac{1}{\sqrt{2}}\right)}{F_{n+2}\left(\frac{1}{\sqrt{2}}\right)}. \nonumber\\
& \ = \   \displaystyle \frac{n+1}{2} - \frac{2(n+2)F_{n+1}\left(\frac{1}{\sqrt{2}}\right) + \frac{n+1}{\sqrt{2}}F_{n+2}\left(\frac{1}{\sqrt{2}}\right)}{9\sqrt{2}F_{n+2}\left(\frac{1}{\sqrt{2}}\right)}\nonumber\\ & \ = \  \frac{4}{9}(n+1)  - \frac{\sqrt{2}}{9} (n+2)  \frac{F_{n+1}\left(\frac{1}{\sqrt{2}}\right)}{F_{n+2}\left(\frac{1}{\sqrt{2}}\right)} \nonumber\\
& \ = \    \displaystyle  \frac{4}{9}(n+1)  - \frac{\sqrt{2}}{9} (n+2)  \left(\frac{1}{\sqrt{2}} + O(2^{-n}) \right)  \ = \  \frac{n}{3} + \frac{2}{9} + O(n2^{-n}),
\end{align}
where in the next to last step we use Equation \eqref{eq:fibexp} to approximate $F_{n+1}(1/\sqrt{2})/F_{n+2}(1/\sqrt{2})$.
\end{proof}

\begin{proof}[Proof of Proposition \ref{prop:var}]
By Lemma \ref{lem:mu},
\begin{equation}
\sigma_n^2 \ = \  \displaystyle \frac{g_n''(1)}{g_n(1)} + \frac{g_n'(1)}{g_n(1)} - \mu_n^2 \ = \  \frac{g_n''(1)}{g_n(1)}  + \mu_n(1 - \mu_n).
\end{equation}
Now,
\begin{equation}
\displaystyle \frac{g_n''(1)}{g_n(1)}  \ = \  \displaystyle \frac{(-2n+1)}{4\sqrt{2}} \frac{F_{n+2}'(\frac{1}{\sqrt{2}})}{F_{n+2}(\frac{1}{\sqrt{2}})} + \frac{(n^2-1)}{4} + \frac{1}{8} \frac{F_{n+2}''(\frac{1}{\sqrt{2}})}{F_{n+2}(\frac{1}{\sqrt{2}})}.
\end{equation}
Applying the derivative formula in Equation \eqref{eq:fibDeriv} and using \eqref{eq:fibexp}, we find
\begin{align}
\displaystyle \frac{F_{n+2}'(\frac{1}{\sqrt{2}})}{F_{n+2}(\frac{1}{\sqrt{2}})}   & \ = \   \displaystyle  \frac{4(n+2)}{9} \frac{F_{n+1}(\frac{1}{\sqrt{2}})}{F_{n+2}(\frac{1}{\sqrt{2}})}  + \frac{\sqrt{2}(n+1)}{9}\nonumber\\
& \ = \    \displaystyle  \frac{4(n+2)}{9} \left[\frac{1}{\sqrt{2}} + O(2^{-n})\right] + \frac{\sqrt{2}(n+1)}{9}
\end{align}

and
\begin{equation}
\begin{array}{lcl}
\displaystyle \frac{F_{n+2}''(\frac{1}{\sqrt{2}})}{F_{n+2}(\frac{1}{\sqrt{2}})}   & \ = \  &  \displaystyle  \frac{16(n^2+3n+2)}{81} \frac{F_{n}(\frac{1}{\sqrt{2}})}{F_{n+2}(\frac{1}{\sqrt{2}})}  + \frac{4\sqrt{2}(2n^2+3n-2)}{81} \frac{F_{n+1}(\frac{1}{\sqrt{2}})}{F_{n+2}(\frac{1}{\sqrt{2}})} +  \frac{2(n^2+9n+8)}{81} \\
 & \ = \  &  \displaystyle  \frac{16(n^2+3n+2)}{81} \left[\frac{1}{2} + O(2^{-n})\right] + \frac{4\sqrt{2}(2n^2+3n-2)}{81} \left[\frac{1}{\sqrt{2}} + O(2^{-n})\right] \\
 & & \displaystyle\ +\  \frac{2(n^2+9n+8)}{81}.
\end{array}
\end{equation}
Thus
\begin{equation}
\begin{array}{lcl}
\sigma_n^2  & \ = \  &  \displaystyle \frac{(-2n+1)}{4\sqrt{2}} \left[\frac{\sqrt{2}}{9}(3n+5)+ O(n2^{-n})\right]  +  \frac{(n^2-1)}{4} + \frac{1}{8} \left[\frac{2n^2}{9} + \frac{2n}{3} + \frac{8}{27} +  O(n^22^{-n})\right] \\
&  & \displaystyle + \left[ \frac{n}{3}+\frac{2}{9} + O\left(\frac{n}{2^n}\right)\right] \left[ 1 -  \frac{n}{3} - \frac{2}{9} + O\left(\frac{n}{2^n}\right)\right]  \ = \  \displaystyle \frac{2n}{27} + \frac{8}{81} + O\left(\frac{n^2}{2^n}\right),
\end{array}
\end{equation} completing the proof.
\end{proof}

\subsection{Gaussian Behavior}

\begin{proof}[Proof of Theorem \ref{thm:gaussian}]
 We prove that  $Y_n'$ converges in distribution to the standard normal distribution as $n \rightarrow \infty$ by  showing  that the moment generating function of $Y_n'$ converges to that of the standard normal (which is $e^{t^2/2}$).  Following the same argument as in \cite[Lemma 4.9]{DDKMMV}, the moment generating function $M_{Y_n'}(t)$ of $Y_n'$ is
\begin{equation}
M_{Y_n'}(t) \ = \  \displaystyle \frac{g_n(e^{t/\sigma_n})e^{-t\mu_n/\sigma_n}}{g_n(1)}.
\end{equation}
Thus we have
\begin{equation}
M_{Y_n'}(t) \ = \  \displaystyle \frac{F_{n+2} \left(\frac{1}{\sqrt{2e^{t/\sigma_n}}} \right) e^{(\frac{n+1}{2}- \mu_n)t/\sigma_n}}{F_{n+2}\left(\frac{1}{\sqrt{2}}\right)},
\end{equation}
and
\begin{equation}
\log(M_{Y_n'}(t)) \ = \  \log F_{n+2} \left(\frac{1}{\sqrt{2e^{t/\sigma_n}}} \right)  + \frac{t}{\sigma_n} \left(\frac{n+1}{2}- \mu_n \right) -  \log F_{n+2}\left(\frac{1}{\sqrt{2}}\right).
\end{equation}
From Equation \eqref{eq:fibexp},
\begin{equation}
\begin{array}{lcl}
F_{n+2}(x)  & \ = \  &   \displaystyle \frac{(x+\sqrt{x^2+4})^{n+2}}{2^{n+2} \sqrt{x^2+4}} \left[ 1 - \left(\frac{x-\sqrt{x^2+4}}{x+\sqrt{x^2+4}}\right)^{n+2}\right].\\
 \end{array}
\end{equation}
Thus
\begin{align}
\log F_{n+2}(x)  \ = & \  (n+2) \log(x+\sqrt{x^2+4}) - (n+2)\log 2 \nonumber\\
&\ \ \ \ - \frac{1}{2}\log(x^2+4)  +  \log  (1 - r(x)^{n+2})\nonumber\\
  = &  \ (n+2) \log x + (n+2) \log\left(1 + \sqrt{1 + \frac{4}{x^2}}\right) - (n+2)\log2  \nonumber\\
 & \ \ \ \   - \frac{1}{2}\log(x^2+4)  \ +  \  O(r(x)^{n}),\
\end{align}
where for all $x$
\begin{equation}
r(x) \ = \   \left(\frac{x-\sqrt{x^2+4}}{x+\sqrt{x^2+4}}\right) \ \in \  (0,1].
\end{equation}
Thus
\begin{equation}
\begin{array}{lcl}
\log F_{n+2}(\frac{1}{\sqrt{2}})  & \ = \  &   \frac{1}{2}(n+3)\log 2 - \log 3 + O(2^{-n}) \\
\end{array}
\end{equation}
and
\begin{align}
\log F_{n+2}\left(\frac{1}{\sqrt{2e^{t/ \sigma_n}}}\right)   \ = \  & \displaystyle -\frac{(n+2)}{2}\log 2 - \frac{(n+2)}{2\sigma_n}t  - (n+2)\log 2 \nonumber\\ & \ \ +\  (n+2)\alpha_n(t) - \frac{1}{2}\beta_n(t) +  O(r^n),
\end{align}
where
\begin{equation}
\alpha_n(t) \ = \ \log(1 + \sqrt{1+8e^{t/\sigma_n}}), \ \
\beta_n(t) \ = \  \log\left (\frac{1}{2}e^{-t/\sigma_n} + 4 \right),
\end{equation}
and
\begin{equation}
r \ = \  r\left(\frac{1}{\sqrt{2e^{t/ \sigma_n}}}\right)\ <\ 1.
\end{equation}
The Taylor series expansions for $\alpha_n(t)$ and $\beta_n(t)$ about $t=0$  are given by
\begin{equation}
\alpha_n(t) \ = \  \log 4 + \frac{1}{3\sigma_n}t + \frac{1}{27\sigma_n^2}t^2 + O(n^{-3/2})
\end{equation}
and
 \begin{equation}
\beta_n(t) \ = \  \log\left(\frac{9}{2}\right)  - \frac{1}{9\sigma_n}t + \frac{4}{81\sigma_n^2}t^2 + O(n^{-3/2}).
\end{equation}
Going back to $\log(M_{Y_n'}(t))$ we now have
 \begin{equation}
 \begin{array}{lcl}
 \log(M_{Y_n'}(t)) &  \ = \  &  \displaystyle  -\frac{3}{2}(n+2)\log 2 - \frac{(n+2)}{2\sigma_n}t  +  (n+2)\left[2\log 2 + \frac{1}{3\sigma_n}t + \frac{1}{27\sigma_n^2}t^2 + O(n^{-3/2})\right] \\
& & \displaystyle  - \frac{1}{2} \left[2\log 3  - \log 2 + O(n^{-1/2})\right]  +  \frac{(n+1 - 2 \mu_n)}{2\sigma_n} t -  \frac{1}{2}(n+3)\log 2  + \log 3 \\
& & + O(2^{-n}) + O(r^{n})  \\
 & \ = \  &  \displaystyle  -\frac{(2\mu_n+1)}{2\sigma_n}t + \frac{(n+2)}{3\sigma_n}t  + \frac{(n+2)}{27\sigma_n^2}t^2 + O(n^{-1/2})    + O(2^{-n}) + O(r^{n}). \\
 \end{array}
 \end{equation}
Since $\mu_n \sim \frac{n}{3}$ and $\sigma_n^2 \sim \frac{2n}{27}$,  it follows that  $\log(M_{Y_n'}(t)) \rightarrow \frac{1}{2}t^2$ as $n \rightarrow \infty$. As this is the moment generating function of the standard normal, our proof is completed.
\end{proof}

\section{Average Gap Distribution}\label{sec:averagegap}


In this section we prove our results about the behavior of gaps between summands in Kentucky-2 decompositions. The advantage of studying the average gap distribution is that, following the methods  of  \cite{BBGILMT,BILMT},  we reduce the problem to a combinatorial one involving how many $m \in [0, a_{2n+1})$ have a gap of length $g$ starting at a given index $i$. We then write the gap probability as a double sum over integers $m$ and starting indices $i$, interchange the order of summation, and invoke our combinatorial results.

While the calculations are straightforward once we adopt this perspective, they are long. Additionally, it helps to break the analysis into different cases depending on the parity of $i$ and $g$, which we do first below and then use those results to determine the probabilities.


\begin{proof}[Proof of Theorem \ref{thm:gapstheorem}]
Let $I_{n}:=[0,a_{2n+1})$ and let $m\in I_{n}$ with the legal decomposition \begin{equation}m\ = \ a_{\ell_1} + a_{\ell_2}  + \cdots + a_{\ell_k}, \end{equation} with $\ell_1 < \ell_2 < \cdots < \ell_k$. For $1\leq i, g\leq n$ we define $X_{i,g}(m)$ as an indicator function which denotes whether the decomposition of $m$ has a gap of length $g$ beginning at $i$. Formally,
\begin{equation}X_{i,g}(m)\ = \ \begin{cases}1&\mbox{if $\exists \ j, \  1 \leq j \leq k$ with $i = \ell_j$ and $i+g = \ell_{j+1}$}\\0&\mbox{otherwise}.\end{cases}\end{equation}
Notice when $X_{i,g}(m)=1$, this implies that there exists a gap between $a_i$ and $a_{i+g}$. Namely $m$ does not contain $a_{i+1},\ldots,a_{i+g-1}$ as summands in its legal decomposition.

As the definition of the Kentucky-2 sequence implies $P(g)=0$ for  $0\leq g\leq 2$, we assume below that $g \ge 3$. Hence if $a_j$ is a summand in the legal decomposition of $m$ and  $a_j<a_i$, then the admissible $j$ are at most $i-4$ if and only if $i$ is even, whereas the admissible $j$ are at most $i-3$ if and only if $i$ is odd. We are interested in computing the fraction of gaps (arising from the decompositions of all $m\in I_n$) of length $g$. This probability is given by
\begin{equation}P_n(g)\ = \ c_n \displaystyle\sum_{m=0}^{a_{2n+1}-1}\displaystyle\sum_{i=1}^{2n-g}X_{i,g}(m),\end{equation}
where \be\label{eq:whatiscn} c_n\ =\ \cn.\ee

To compute the above mentioned probability we argue based on the parity of $i$. We find the contribution of gaps of length $g$ from even $i$ and odd $i$ separately and then add these two. The case when $g=3$ is a little simpler, as only even $i$ contribute (if $i$ were odd and $g=3$ we would violate the notion of Kentucky-2  legal). \\ \

\noindent\textbf{Part 1 of the Proof: Gap Preliminaries:} \\ \

\noindent \textbf{Case 1: $i$ is even:} Suppose that $i$ is even.  This means that $a_i$ is the largest entry in its bin. Thus the largest possible summand less than $a_i$ would be $a_{i-4}$. First we need to know the number of legal decompositions that only contain summands from $\{a_1,\ldots,a_{i-4}\}$, but this equals the number of integers that lie in $[0,a_{2\left(\frac{i-4}{2}\right)+1}) \ = [0,a_{i-3})$. By Equation \eqref{eq:expterm}, this is given by
\begin{equation} \ a_{2\left(\frac{i-4}{2}\right)+1}{\ = a_{i-3}} \ = \  \frac{1}{3}(2^{\frac{i}{2}}+(-1)^{\frac{i-2}{2}}).\end{equation}

Next we must consider the possible summands between $a_{i+g}$ and $a_{2n+1}$. There are two cases to consider depending on the parity of $i+g$. \\ \

\textbf{Subcase (i): $g$ is even:} Notice that if $i+g$ is even (that is when $g$ is even) and $a_j$ is a summand in the legal decomposition of $m$ with $a_{i+g} < a_j$, then $j \geq {i+g+3}$. In this case the number of  legal decompositions only containing summands from the set $\{a_{i+g+3}, a_{i+g+4}, \dots, a_{2n}\}$ is the same as the number of integers that lie in $[0, a_{(2n-(i+g+2)) + 1})$, which equals \be a_{(2n-(i+g+2)) + 1}\  = \ a_{2\left(\frac{2n-(i+g+2)}{2}+1\right) - 1}\ =\ \frac{1}{3}\left(2^{\frac{2n-(i+g)}{2}+1}+(-1)^{\frac{2n-(i+g)}{2}}\right).\ee
So for a fixed even $i,g$, the number of $m\in I_n$ that have a gap of length $g$ beginning at $i$ is
\be\frac{1}{9}(2^{\frac{i}{2}}+(-1)^{\frac{i-2}{2}})(2^{\frac{2n-(i+g)}{2}+1}+(-1)^{\frac{2n-(i+g)}{2}}).
\ee
Hence in this case  we have that
\begin{align}
\sum_{m=0}^{a_{2n+1}-1} \sum_{i =1 \atop i, g\ {\rm even}}^{2n-g}X_{i,g}(m)&\ = \  
\frac{1}{9} \sum_{i =1 \atop i, g\ {\rm even}}^{2n-g}(2^{\frac{i}{2}}+(-1)^{\frac{i-2}{2}})(2^{\frac{2n-(i+g)}{2}+1}+(-1)^{\frac{2n-(i+g)}{2}}).
\end{align}
\ \\

\textbf{Subcase (ii): $g$ is odd:}  In the case when $i$ is even and $g$ is odd, {any legal decomposition of an integer $m\in I_n$ with a gap from $i$ to $i+g$ that contains summands $  a_j>a_{i+g}$} must have $j \geq i+g+4$. The number of  legal decompositions achievable only with summands in the set $\{a_{i+g+4}, a_{i+g+5}, \dots, a_{2n}\}$ is the same as the number of integers in the interval $[0, a_{2n-(i+g+2)})$, which is given by \be a_{2n-(i+g+2)} \ = \ a_{2\left(\frac{2n-(i+g+1)}{2}\right)-1}\ = \ \frac{1}{3}\left(2^{\frac{2n-(i+g+1)}{2}+1}+(-1)^{\frac{2n-(i+g+1)}{2}}\right).\ee Hence when $i$ is even and $g$ is odd we have that
\begin{align}
\sum_{m=0}^{a_{2n+1}-1} \sum_{i =1 \atop i\ {\rm even}, g\ {\rm odd}}^{2n-g} X_{i,g}(m) 
&\ = \ \frac{1}{9} \sum_{i =1\atop i\ {\rm even}, g\ {\rm odd}}^{2n-g} (2^{\frac{i}{2}}+(-1)^{\frac{i-2}{2}}) \left(2^{\frac{2n-(i+g+1)}{2}+1}+(-1)^{\frac{2n-(i+g+1)}{2}}\right).
  \end{align}

\ \\

\textbf{Subcase (iii): $g = 3$:} As remarked above, there are no gaps of length 3 when $i$ is odd, and thus the contribution from $i$ even is the entire answer and we can immediately find that
\begin{align}
P_n(3)&\ =\ \cm\displaystyle\sum_{m=0}^{a_{2n+1}-1}\displaystyle\sum_{i=1\atop i\ {\rm even}}^{2n-3}X_{i,3}(m)\nonumber\\
&\ =\ \frac{1}{9}\cm \sum_{i =1\atop i\ {\rm even}}^{2n-3} (2^{\frac{i}{2}}+(-1)^{\frac{i-2}{2}}) \left(2^{\frac{2n-(i+4)}{2}+1}+(-1)^{\frac{2n-(i+4)}{2}}\right)\nonumber\\
&\ =\ \frac{1}{9}\cm \sum_{i =1\atop i\ {\rm even}}^{2n-3} 2^{n-1}+2^{\frac{i}{2}}(-1)^{\frac{2n-(i+4)}{2}}+2^{\frac{2n-(i+4)}{2}+1}(-1)^{\frac{i-2}{2}}+(-1)^{n-3}.
\end{align}
As the largest term in the above sum is $2^{n-1}$, we have
\be P_n(3) = \frac{c_n}{9}\left[(n-1)2^{n-1}+O(2^{n})\right].
\ee
Since $\mu_n \sim \frac{n}{3}$ and $a_{2n+1} \sim \frac{1}{3}(4\cdot2^n)$, using \eqref{eq:whatiscn} we find that up to lower order terms which vanish as $n\to\infty$ we have
\begin{align}\label{eq:whatiscmequalto}
\cm \ \sim\ \frac{9}{n2^{n+2}}.
\end{align}
Therefore
\begin{align}&P_n(3)\ \sim\  \frac{1}{n2^{n+2}}\cdot \left[(n-1)2^{n-1}+O(2^{n})\right]\ = \ \frac{1}{8}\cdot\frac{n-1}{n}+O\left(\frac{1}{n}\right).
\end{align}
Now as $n$ goes to infinity we see that $P(3)=1/8$.

\ \\

\noindent\textbf{Case 2: $i$ is odd:} Suppose now that $i$ is odd. The largest possible summand less than $a_i$ in a legal decomposition is $a_{i-3}$. As before we now need to know the number of integers that lie in $[0,a_{2\left(\frac{i-3}{2}\right)+1})$, but this equals \be a_{2\left(\frac{i-3}{2}\right)+1}\ =\ a_{2\left(\frac{i-1}{2}\right)-1}\ = \ \frac{1}{3}\left(2^{\frac{i-1}{2}+1}+(-1)^{\frac{i-1}{2}}\right).\ee

We now need to consider the parity of $i+g$.

\ \\

\textbf{Subcase (i): $g$ is odd:} When $i$ and $g$ are odd, we know $i+g$ is even and therefore the first possible summand greater than $a_{i+g}$ is $a_{i+g+3}$.  Like before, the number of  legal decompositions using summands from the set $\{a_{i+g+3},a_{i+g+4}, \dots, a_{2n}\}$ is the same as the number of $m$ with legal decompositions using summands from the set $\{a_1, a_2, \dots, a_{2n-(i+g+2)}\}$, which is $\frac{1}{3}\left(2^{\frac{2n-(i+g)}{2}+1}+(-1)^{\frac{2n-(i+g)}{2}}\right)$.  This leads to
\begin{align}
{\displaystyle\sum_{m=0}^{a_{2n+1}-1}}\displaystyle\sum_{i =1\atop i\ {\rm odd}, g\ {\rm odd}}^{2n-g}X_{i,g}(m)
 &\ = \ \frac{1}{9}\displaystyle\sum_{i =1\atop i\ {\rm odd}, g\ {\rm odd}}^{2n-g}(2^{\frac{i-1}{2}+1}+(-1)^{\frac{i-1}{2}})(2^{\frac{2n-(i+g)}{2}+1}+(-1)^{\frac{2n-(i+g)}{2}}).
  \end{align}

\ \\

\textbf{Subcase (ii): $g$ is even:} Following the same line of argument we see that if $i$ is odd and $g$ is even, then
\begin{align}
{\displaystyle\sum_{m=0}^{a_{2n+1}-1}}\displaystyle\sum_{i =1\atop i\ {\rm odd}, g\ {\rm even}}^{2n-g}X_{i,g}(m)
 &\ = \ \frac{1}{9}\displaystyle\sum_{i =1\atop i\ {\rm odd}, g\ {\rm even}}^{2n-g}(2^{\frac{i-1}{2}+1}+(-1)^{\frac{i-1}{2}})(2^{\frac{2n-(i+g+1)}{2}+1}+(-1)^{\frac{2n-(i+g+1)}{2}}).
  \end{align}

\ \\

Using these results, we can combine the various cases to determine the gap probabilities for different $g$. \\ \

\noindent\textbf{Part 2 of the Proof: Gap Probabilities:} \\ \

\noindent \textbf{Case 1: $g$ is  even:} As $g$ is even, we have $g=2j$ for some positive integer $j$. Therefore
\begin{align}
P_n(2j)&\ =\ \cm\displaystyle\sum_{m=0}^{a_{2n+1}-1}\displaystyle\sum_{i=1}^{2n-2j}X_{i,2j}(m)\\
&\ =\ \cm\displaystyle\sum_{m=0}^{a_{2n+1}-1}\displaystyle\sum_{i=1\atop i\ {\rm even}}^{2n-2j}X_{i,2j}(m)+\  \cm\displaystyle\sum_{m=0}^{a_{2n+1}-1}\displaystyle\sum_{i=1\atop i\ {\rm odd}}^{2n-2j}X_{i,2j}(m)\\
&\ = \cm\left[\frac{1}{9} \sum_{i =1 \atop i\ {\rm even}}^{2n-2j}(2^{\frac{i}{2}}+(-1)^{\frac{i-2}{2}})(2^{\frac{2n-(i+2j)}{2}+1}+(-1)^{\frac{2n-(i+2j)}{2}})\right]\\\nonumber &\;\;\;\;\;\;+\  \cm\left[ \frac{1}{9}\displaystyle\sum_{i =1\atop i\ {\rm odd}}^{2n-2j}(2^{\frac{i-1}{2}+1}+(-1)^{\frac{i-1}{2}})(2^{\frac{2n-(i+2j+1)}{2}+1}+(-1)^{\frac{2n-(i+2j+1)}{2}})\right]\\
&\ = \ \frac{1}{9}\cm\displaystyle\sum_{ i =1\atop i\ {\rm even}}^{2n-2j}(2^{n-j+1}+2^{\frac{i}{2}}(-1)^{\frac{2n-(i+2j)}{2}} + 2^{\frac{2n-(i+2j)}{2}+1}(-1)^{\frac{i-2}{2}}+(-1)^{n- j-1})\\\nonumber
 &\;\;\;\;\;\;\;\;+\frac{1}{9}\cm\displaystyle\sum_{i =1\atop i\ {\rm odd}}^{2n-2j}(2^{n-j+1}+2^{\frac{i-1}{2}+1}(-1)^{\frac{2n-(i+2j+1)}{2}}+ 2^{\frac{2n-(i+2j+1)}{2}+1}(-1)^{\frac{i-1}{2}}+(-1)^{n-j-1}).
  \end{align}

Notice that the largest terms in the above sums/expressions are given by $2^{n-j+1}$ and $ 2^{n-j+1}$, the sum of which gives $4(n-j)2^{n-j}$.
The rest of the terms are of lower order and are dominated as $n\to\infty$. Using \eqref{eq:whatiscmequalto} for $\cm$ we find
\begin{align}&P_n(2j)\ \sim\ \frac{\cm}{9} 4(n-j)2^{n-j}\ \sim\ \frac{1}{n2^{n+2}}\cdot 4(n-j)2^{n-j}\ = \ \frac{n-j}{n2^j},
\end{align}
and thus as $n$ goes to infinity we see that $P(2j)=1/2^j$.

\ \\

\noindent\textbf{Case 2: $g$ is odd:} As $g$ is odd we may write $g=2j+1$. Thus
\begin{align}
P_n(2j+1)&\ =\ \cm\displaystyle\sum_{m=0}^{a_{2n+1}-1}\displaystyle\sum_{i=1}^{2n-2j-1}X_{i,2j+1}(m)\\
&\ =\ \cm\displaystyle\sum_{m=0}^{a_{2n+1}-1}\displaystyle\sum_{i=1\atop i\ {\rm even}}^{2n-2j-1}X_{i,2j+1}(m)+\  \cm\displaystyle\sum_{m=0}^{a_{2n+1}-1}\displaystyle\sum_{i=1\atop i\ {\rm odd}}^{2n-2j-1}X_{i,2j+1}(m)\\
&\ = \cm\left[\frac{1}{9} \sum_{i =1\atop i\ {\rm even}}^{2n-2j-1} (2^{\frac{i}{2}}+(-1)^{\frac{i-2}{2}}) \left(2^{\frac{2n-(i+2j+2)}{2}+1}+(-1)^{\frac{2n-(i+2j+2)}{2}}\right)\right]\\\nonumber
&\;\;\;\;\;\;+\  \cm\left[\frac{1}{9}\displaystyle\sum_{i =1\atop i\ {\rm odd}}^{2n-2j-1}(2^{\frac{i-1}{2}+1}+(-1)^{\frac{i-1}{2}})(2^{\frac{2n-(i+2j+1)}{2}+1}+(-1)^{\frac{2n-(i+2j+1)}{2}}) \right]\\
&\ =\frac{1}{9}\cm \sum_{i =1\atop i\ {\rm even}}^{2n-2j-1} 2^{n-j}+2^{\frac{i}{2}}(-1)^{\frac{2n-(i+2j+2)}{2}}+2^{\frac{2n-(i+2j+2)}{2}+1}(-1)^{\frac{i-2}{2}}+(-1)^{n-j-2}\\\nonumber
&\;\;\;\;\;\;+\ \frac{1}{9}\cm\displaystyle\sum_{i =1\atop i\ {\rm odd}}^{2n-2j-1}2^{n-j+1}+2^{\frac{i-1}{2}+1}(-1)^{\frac{2n-(i+2j+1)}{2}}+2^{\frac{2n-(i+2j+1)}{2}+1}(-1)^{\frac{i-1}{2}}+(-1)^{n-j-1}.
  \end{align}
Notice that the largest terms in the above sums/expressions are given by $ 2^{n-j}$ and $2^{n-j+1}$, the sum of which gives $3(n-j)2^{n-j}$.
The rest of the terms are of lower order and are dominated as $n\to\infty$. Using \eqref{eq:whatiscmequalto} for $\cm$ we find
\begin{align}&P_n(2j+1)\ \sim\ \frac{\cm}{9} 3(n-j)2^{n-j}\ \sim\ \frac{1}{n2^{n+2}}\cdot3(n-j)2^{n-j}\ = \ \frac{3}{4}\cdot\frac{n-j}{n2^j},
\end{align} and thus as $n$ goes to infinity we see that $P(2j+1)=\frac{3}{4}\left(1/2^j\right)$.
\end{proof}

\section{Conclusion and Future Work}\label{sec:conclusionandfuturework}

Our results generalize Zeckendorf's theorem to an interesting new class of recurrence relations, specifically to a case where the first coefficient is zero. While we still have uniqueness of decomposition here, that is not always the case. In a future work \cite{CFHMN1} we study another example with first coefficient zero, the recurrence $a_{n+1} = a_{n-1} + a_{n-2}$. This leads to what we call the Fibonacci quilt, and there uniqueness of decomposition fails.

Additionally, the Kentucky-2  sequence is but one of infinitely many $(s,b)$-Generacci recurrences; in \cite{CFHMN2} we extend the results of this paper to arbitrary $(s,b)$.

\appendix


\section{Unique Decompositions}\label{sec:proofofK2unique}



\begin{proof}[Proof of Theorem \ref{K2unique}] Our proof is constructive. We build our sequence by only adjoining terms that ensure that we can {\em uniquely} decompose a number while never using more than one summand from the same bin or two summands from adjacent bins. The sequence begins:
\begin{equation} \underbracket{\ 1,\ 2\ }_{b_1}\ ,\ \underbracket{\    3,\  4 \ }_{b_2}\ ,\ \underbracket{\      5,\ 8 \ }_{b_3}\ ,\ \ldots.\end{equation}
Note we would not adjoin 9 because then 9 would legally decompose two ways, as $9=9$ and as $9=8+1$. The next number in the sequence must be the smallest integer that cannot be decomposed legally using the current terms.

We proceed with proof by induction. The basis case follows from a direct calculation. Notice that if $i\leq 5$ then $i=a_i$. Also
$6=a_5+a_1$.

The sequence continues:
\begin{equation} \ldots \ , \ \underbracket{\ a_{2n-5},\ a_{2n-4}\ }_{b_{n-2}}\ ,\ \underbracket{\    a_{2n-3},\ a_{2n-2} \ }_{b_{n-1}}\ ,\ \underbracket{\      a_{2n-1},\ a_{2n} \ }_{b_n}\ ,\ \underbracket{\     a_{2n+1},\ a_{2n+2} \ }_{b_{n+1}}\ ,\ \ldots\end{equation}
By induction we assume that there exists a unique decomposition for all integers $m\leq a_{2n}+w$, where $w$ is the maximum integer that legally can be decomposed using terms in the set $\{a_1, a_2, a_3$, $\dots$, $a_{2n-4}\}$. By construction we know that $w=a_{2n-3}-1$, as this was the reason we adjoined $a_{2n-3}$ to the sequence.

Now let $y$ be the maximum integer that can be legally decomposed using terms in the set $\{a_1, a_2, a_3$, $\dots$, $a_{2n}\}$. By construction we have
\begin{align}y\ = \ a_{2n}+w\ = \ a_{2n}+a_{2n-3}-1.\label{eq1}\end{align}
Similarly,  let $x$ be the maximum integer that legally can be decomposed using terms in the set  $\{a_1, a_2, a_3$, $\dots$, $a_{2n-2}\}$. Note $x = a_{2n-1}-1$ as this is why we include $a_{2n-1}$ in the sequence.

\bigskip
\noindent{\bf Claim:} $a_{2n+1}=y+1$ and this decomposition is unique.
\smallskip

By induction we know that $y$ was the largest value that we could legally make using only terms in $\{a_1, a_2, \dots, a_{2n}\}$. Hence we choose $y+1$ as $a_{2n+1}$ and $y+1$ has a unique decomposition.

\bigskip
\noindent{\bf Claim:} All $N \in {[y+1, y+1+x]=[a_{2n+1},a_{2n+1}+x]}$ have a unique decomposition.
\smallskip

We can legally and uniquely decompose all of $1,2,3,\ldots,x$ using elements in the set $\{a_1, a_2$, $\dots$, $a_{2n-2}\}$. Adding $a_{2n+1}$ to the  decomposition is still legal  since $a_{2n+1}$ is not a member of any bins adjacent to $\{b_1, b_2, \dots, b_{n-1}\}$. The uniqueness follows from the fact that if we do not include $a_{2n+1}$ as a summand, then the decomposition does not yield a number big enough to exceed $y+1$.


\bigskip
\noindent{\bf Claim:} $a_{2n+2}=y+1+x+1=a_{2n+1}+x+1$ and this decomposition is unique.
\smallskip

By construction the largest integer that legally  can be decomposed using terms  $\{a_1, a_2, \dots, a_{2n+1}\}$ is
$y + 1 + x$.

\bigskip
\noindent{\bf Claim:} All $N \in {[a_{2n+2}, a_{2n+2}+x]}$ have a unique decomposition.
\smallskip

First note that the decomposition exists as we can legally and uniquely construct $a_{2n+2}+v$, where $0\leq v\leq x$.
For uniqueness, we note that if we do not use $a_{2n+2}$, then the summation would be too  small. 

\bigskip
\noindent{\bf Claim:} $a_{2n+2}+x$ is the largest integer that legally  can be decomposed using terms  $\{a_1, a_2$, $\dots$, $a_{2n+2}\}$.
\smallskip

This follows from construction. \end{proof}





\section{Generating Function Proofs}\label{app:gfproof}

In \S\ref{sec:gaussianbehavior} we proved that the distribution of the number of summands in a Kentucky-2 decomposition exhibits Gaussian behavior by using properties of Fibonacci polynomials. This approach was possible because we had an explicit, tractable form for the $p_{n,k}$'s (Proposition \ref{Kentucky-Pnk-explicit}) that coincided with the explicit sum formulas associated with the Fibonacci polynomials.  Below we present a second proof of Gaussian behavior using a more general approach, which will be more useful in addressing the behavior of the number of summands when dealing with general $(s,b)$-Generacci sequences.
%

As in the first proof, we are interested in $g_n(y)$,  the coefficient of the $x^n$ term in $F(x,y)$.

\begin{lem} We have
\begin{eqnarray}\label{eq:gn2}
g_n(y) & \ = \  & \frac{1}{2^{n+1}\sqrt{1+8y}} \left[4y\left(1+\sqrt{1+8y}\right)^n-4y\left(1-\sqrt{1+8y}\right)^n\right. \nonumber\\ & & \left. \ \ \ \ \ \ \ \ \ + \ \left(1+\sqrt{1+8y}\right)^{n+1}-\left(1-\sqrt{1+8y}\right)^{n+1}\right].
\end{eqnarray}
\end{lem}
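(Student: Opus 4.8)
The goal is to derive the closed form for $g_n(y)$, the coefficient of $x^n$ in the generating function $F(x,y)$. Recall we have already established in Equation \eqref{cform} that $F(x,y) = \frac{1+2xy}{1-x-2x^2y}$. The plan is to extract the coefficient of $x^n$ by a partial-fractions decomposition of this rational function (in the variable $x$, treating $y$ as a parameter), which will turn the geometric-type expansion into a difference of two $n$-th powers — exactly the form appearing in \eqref{eq:gn2}.

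First I would factor the denominator $1 - x - 2x^2 y$ as a quadratic in $x$. Writing $-2y(x - \rho_+)(x-\rho_-)$, the roots $\rho_\pm$ satisfy $2y x^2 + x - 1 = 0$, so $\rho_\pm = \frac{-1 \pm \sqrt{1+8y}}{4y}$; the quantity $\sqrt{1+8y}$ thus enters naturally, matching the prefactor $1/\sqrt{1+8y}$ in the target formula. Equivalently, and more cleanly, I would write $\frac{1}{1 - x - 2x^2 y}$ in terms of the reciprocal roots $\lambda_\pm = \frac{1 \pm \sqrt{1+8y}}{2}$ of the reversed polynomial $t^2 - t - 2y$, since these are the characteristic roots governing the sequence $q_n$ defined by $q_n = q_{n-1} + 2y\, q_{n-2}$. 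Indeed $\frac{1}{1-x-2x^2y} = \sum_{n\ge0} q_n x^n$ with $q_0 = 1$, $q_1 = 1$, and the solution of this linear recurrence is $q_n = \frac{\lambda_+^{n+1} - \lambda_-^{n+1}}{\lambda_+ - \lambda_-} = \frac{(1+\sqrt{1+8y})^{n+1} - (1-\sqrt{1+8y})^{n+1}}{2^{n+1}\sqrt{1+8y}}$.

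With that series in hand, the coefficient of $x^n$ in $F(x,y) = (1 + 2xy)\sum_{m\ge0} q_m x^m$ is simply $g_n(y) = q_n + 2y\, q_{n-1}$. Substituting the explicit formula for $q_n$ and $q_{n-1}$ and collecting terms over the common factor $\frac{1}{2^{n+1}\sqrt{1+8y}}$ yields precisely the bracketed expression in \eqref{eq:gn2}: the $q_n$ contribution gives the $\left(1+\sqrt{1+8y}\right)^{n+1} - \left(1-\sqrt{1+8y}\right)^{n+1}$ piece, while the $2y\,q_{n-1}$ contribution (after clearing the $2^{-n}$ against $2^{-(n+1)}$) gives the $4y\left[\left(1+\sqrt{1+8y}\right)^{n} - \left(1-\sqrt{1+8y}\right)^{n}\right]$ piece. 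I would verify the low-order cases $g_0 = 1$, $g_1 = 1 + 2y$ against the formula to confirm the indexing.

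The only real obstacle is bookkeeping: getting the powers of $2$ and the index shifts exactly right so that both contributions share the denominator $2^{n+1}\sqrt{1+8y}$, and confirming the recurrence initial data $q_0 = q_1 = 1$ (rather than, say, $q_1 = 0$) so that the $\lambda_\pm^{n+1}$ exponents come out correctly. Since this is a routine extraction of a linear-recurrence closed form, no genuine analytic difficulty arises; the result also furnishes, as the paper notes, an independent derivation of the explicit Fibonacci-polynomial formula \eqref{eq:fibexp} once one identifies $g_n(y)$ with $(\sqrt{2y})^{n+1}F_{n+2}(1/\sqrt{2y})$ via Equation \eqref{eq:gnFib}.
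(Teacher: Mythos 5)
Your proposal is correct and follows essentially the same route as the paper: both start from the closed form $F(x,y)=\frac{1+2xy}{1-x-2x^2y}$ and extract the coefficient of $x^n$ via the roots of the denominator, arriving at the same Binet-type expression over $2^{n+1}\sqrt{1+8y}$. The only (cosmetic) difference is that you solve the coefficient recurrence $q_n=q_{n-1}+2y\,q_{n-2}$ directly and then form $g_n(y)=q_n+2y\,q_{n-1}$, whereas the paper expands $1/(x-x_1)-1/(x-x_2)$ as geometric series after an explicit partial-fraction decomposition; these are equivalent computations.
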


\begin{proof} For brevity set $x_1=x_1(y)$ and $x_2=x_2(y)$ for the roots of $x$ in $x^2+\frac{1}{2y}x-\frac{1}{2y}$.
In particular, we find
\begin{align}\label{xs} x_1 \ = \  -\frac{1}{4y}\left(1+\sqrt{1+8y}\right) \ \ \ \
x_2 \ = \  -\frac{1}{4y}\left(1-\sqrt{1+8y}\right).
\end{align}
Since $x_1$ and $x_2$ are unequal for all $y>0$, we can decompose $F(x,y)$ using partial fractions:
\begin{equation}F(x,y) \ = \  \frac{1+2xy}{-2y(x-x_1)(x-x_2)} \ = \   \frac{1+2xy}{-2y}\frac{1}{x_1-x_2}\left[  \frac{1}{x-x_1}-\frac{1}{x-x_2}  \right].\end{equation}
Using the geometric series formula, after some algebra we obtain
\be\label{gny} F(x,y) \ = \  \frac{1+2xy}{-2y}\frac{1}{x_1-x_2}\sum_{i\geq0}\left[  \frac{1}{x_1}\left(\frac{x}{x_1}\right)^i-\frac{1}{x_2}\left(\frac{x}{x_2}\right)^i \right] .  \ee
From here we find that that the coefficient of $x^n$ is
\begin{equation} g_n(y) \ = \  \frac{1}{-2y(x_1-x_2)}\left[\frac{1}{x_1^{n+1}}-\frac{1}{x_2^{n+1}}+\frac{2y}{x_1^{n}}-\frac{2y}{x_2^{n}}  \right].  \end{equation}
Substituting the functions from Equation \eqref{xs} and simplifying we obtain the desired result.
\end{proof}

As we mentioned in \S\ref{sec:mv}, we have the following corollary.

\begin{cor} Let $F_n(x)$ be a Fibonacci polynomial. Then
\begin{equation}
F_n(x) \ = \  \frac{(x+\sqrt{x^2+4})^n - (x-\sqrt{x^2+4})^n}{2^n \sqrt{x^2+4}}.\end{equation}
\end{cor}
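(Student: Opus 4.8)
The plan is to prove the corollary by equating the two closed forms we now have for $g_n(y)$: the Fibonacci-polynomial expression of Equation \eqref{eq:gnFib}, namely $g_n(y) = (\sqrt{2y})^{n+1} F_{n+2}(1/\sqrt{2y})$, and the explicit exponential expression of Equation \eqref{eq:gn2}. Since both compute the same coefficient of $x^n$ in $F(x,y)$, they must agree for all $y>0$, and this identity pins down $F_{n+2}$ evaluated at the point $1/\sqrt{2y}$. Solving for the Fibonacci polynomial and reindexing will then yield the stated formula, giving an independent derivation of Equation \eqref{eq:fibexp} that does not appeal to the recursive definition of $F_n(x)$.

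Concretely, first I would set $x := 1/\sqrt{2y}$, equivalently $y = 1/(2x^2)$, so that $\sqrt{2y}=1/x$ and Equation \eqref{eq:gnFib} reads $F_{n+2}(x) = x^{n+1} g_n(y)$. The bulk of the work is rewriting the right-hand side of \eqref{eq:gn2} under this substitution. The key simplifications are $1+8y = (x^2+4)/x^2$, hence $\sqrt{1+8y} = \sqrt{x^2+4}/x$ for $x>0$, together with
\[ 1+\sqrt{1+8y} = \frac{x+\sqrt{x^2+4}}{x}, \qquad 1-\sqrt{1+8y} = \frac{x-\sqrt{x^2+4}}{x}, \qquad 4y = \frac{2}{x^2}. \]
Writing $u := x+\sqrt{x^2+4}$ and $v := x-\sqrt{x^2+4}$, substituting into \eqref{eq:gn2}, and collecting the powers of $x$ (each bracketed term acquires a factor $x^{-(n+2)}$ or $x^{-(n+1)}$, which the prefactor $x^{n+1}$ partly cancels) reduces $F_{n+2}(x)=x^{n+1}g_n(y)$ to
\[ F_{n+2}(x) \ = \ \frac{1}{2^{n+1}\sqrt{x^2+4}}\left[2(u^n-v^n) + x(u^{n+1}-v^{n+1})\right]. \]

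The crux is then a short algebraic identity that folds the bracket into a single pair of $(n+2)$-th powers. Observe that $u$ and $v$ are the two roots of $t^2 - 2xt - 4 = 0$, so $u+v=2x$, $uv=-4$, and in particular $u^2 = 2xu+4$ and $v^2 = 2xv+4$. Multiplying these by $u^n$ and $v^n$ respectively and subtracting gives
\[ u^{n+2}-v^{n+2} \ = \ 2x(u^{n+1}-v^{n+1}) + 4(u^n-v^n), \]
which is exactly twice the bracket above. Hence $F_{n+2}(x) = (u^{n+2}-v^{n+2})/(2^{n+2}\sqrt{x^2+4})$, and replacing $n+2$ by $n$ produces the claimed formula. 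I expect no genuine conceptual obstacle: the final collapse is forced by the recurrence $t^2 = 2xt+4$ satisfied by $u$ and $v$, so the only points requiring care are the bookkeeping of powers of $x$ through the substitution and fixing the sign $\sqrt{1+8y} = +\sqrt{x^2+4}/x$, which is what makes the two formulas for $g_n(y)$ coincide on $y>0$.
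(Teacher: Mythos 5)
Your proposal is correct and follows essentially the same route as the paper, whose entire proof is the one-line instruction to equate the right-hand sides of Equations \eqref{eq:gnFib} and \eqref{eq:gn2} and substitute $x=1/\sqrt{2y}$; you have simply carried out the algebra the paper leaves implicit, and your key collapsing step via $u^2=2xu+4$, $v^2=2xv+4$ is exactly the computation needed to make that one-liner work.
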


\begin{proof}
Set the righthand sides of Equations \eqref{eq:gnFib} and \eqref{eq:gn2} equal and let $x=1/\sqrt{2y}$.
\end{proof}

\begin{proof}[Proof of Proposition \ref{prop:mean}]
Straightforward, but somewhat tedious, calculations give \bea g_n(1) &\ =\ & \frac{1}{3}\left((-1)^{n+1}+2^{n+2}\right) \nonumber\\
g'_n(1) & \ = \ & \frac{n}{9}\left(2^{n+2}+2(-1)^{n+1}\right)+\frac{2}{27}\left(2^{n+2}\right)+o(1).
\eea
Dividing these two quantities and using Lemma \ref{lem:mu} gives the desired result.
\end{proof}

\begin{proof}[Proof of Proposition \ref{prop:var}]
Another straightforward (and again somewhat tedious) calculation yields
\begin{align}
\sigma_n^2&\ = \ \frac{2^{2n+5}(4+3n)-2(8+3n)-2^{n+2}(-1)^{n}(28+36n+9n^2)}{81(2^{n+2}-(-1)^{n})^2}\nonumber\\
&\ = \ \frac{n\big[(6)2^{2n+4}-18(-1)^n 2^{n+3}-6\big]+\big[ (8) 2^{2n+4}-14(-1)^n 2^{n+3}-16  \big]-4.5(-1)^n n^2 2^{n+3}}{81\big[2^{2n+4}-(-1)^n 2^{n+3}+1\big]}.
\end{align}
\end{proof}

\begin{proof}[Proof of Theorem \ref{thm:gaussian}]
As in our earlier proof, we show that the moment generating function of $Y_n'$ converges to that of the standard normal. Following the same argument as in \cite[Lemma 4.9]{DDKMMV}, the moment generating function $M_{Y_n'}(t)$ of $Y_n'$ is
\begin{equation}
M_{Y_n'}(t) \ = \  \frac{g_n(e^{t/\sigma_n})e^{-t\mu_n/\sigma_n}}{g_n(1)}.
\end{equation}
Taking logarithms yields
\begin{equation}\label{logM} \log M_{Y_n'}(t) \ = \  \log [g_n(e^{t/\sigma_n})]-\log[g_n(1)]-\frac{t\mu_n}{\sigma_n}.  \end{equation}
We tackle the right hand side in pieces.

Let $r_n=t/\sigma_n$. Since $\sigma_n^2 = \frac{2n}{27}+\frac{8}{81}+O\left(\frac{n^2}{2^n}\right)$,  as $n$ goes to infinity $r_n$ goes to 0. This allows us to use Taylor series expansions.

First we rewrite $g_n(e^{r_n})$
\begin{align}
 g_n(e^{r_n}) =& \frac{1}{\sqe}\left[  \frac{(1+\sqe)^n(4e^{r_n}+1+\sqe)}{2^{n+1}} \right. \nonumber\\
&\qquad\qquad\left. -\frac{4e^{r_n}(1-\sqe)^n}{2^{n+1}}-\frac{(1-\sqe)^{n+1}}{2^{n+1}}     \right]  .   \end{align}
Using Taylor series expansions of the exponential and square root functions we obtain \begin{equation}
e^{r_n}= 1+ o(1)\quad \mbox{and} \quad \frac{1-\sqe}{2} = -1 +o(1).
\end{equation} Thus
\begin{align}
  \frac{4e^{r_n}(1-\sqe)^n}{2^{n+1}}+\frac{(1-\sqe)^{n+1}}{2^{n+1}} &\ = \ 2(-1)^n +o(1) -(-1)^n+o(1) \nonumber\\
&\  = \   (-1)^n+o(1) . \end{align}
Hence
\begin{equation}   g_n(e^{r_n}) = \frac{1}{\sqe}\left[  \frac{(1+\sqe)^n(4e^{r_n}+1+\sqe)}{2^{n+1}}  -   (-1)^n+o(1) \right]  .    \end{equation}
So
\begin{align}
\log(g_n(e^{r_n}))\ = \ &-\tfrac{1}{2}\log(1+8e^{r_n})+n\log(1+\sqe)\nonumber\\
&+\log(4e^{r_n}+1+\sqe)-(n+1)\log2+o(1).
\end{align}
Continuing to use Taylor series expansions
\begin{align}\nonumber
\log(g_n(e^{r_n}))=&-\tfrac{1}{2}\left[\log9 +\frac{8}{9}r_n+\frac{4}{81}r_n^2\right]+n\left[\log4 +\frac{1}{3}r_n+\frac{1}{27}r_n^2\right]\\\label{dn2}
&\ \ \ \ +\ \left[\log8 +\frac{2}{3}r_n+\frac{2}{27}r^2_n\right]+O(r_n^3)-(n+1)\log2+o(1).
\end{align}

Finally, recall $g_n(1) = \frac{1}{3}[(-1)^{n+1}+2^{n+2}]$ so
\be\label{dn3}\log[g_n(1)] \ = \ -\log 3+(n+2)\log2 +o(1).\ee

To finish we plug values into Equation \eqref{logM}. In particular, plug in $\log(g_n(e^{r_n}))$ from Equation \eqref{dn2}, $\log[g_n(1)]$ from Equation \eqref{dn3}, $\mu_n$ from Proposition \ref{prop:mean}, $\sigma_n$ from Proposition \ref{prop:var}, and $r_n=t/\sigma_n$. This gives
\begin{equation} \log M_{Y_n'}(t) = \frac{t^2}{2}+o(1).  \end{equation}
Thus, $M_{Y_n'}(t)$ converges to the moment generating function of the standard normal distribution. Which according to probability theory, implies that
the distribution of $Y_n'$ converges to the standard normal distribution.
\end{proof}


\ \\

\end{document}